\theoremstyle{definition}
\newtheorem{definition}{Definition}
\theoremstyle{theorem}
\newtheorem{proposition}[definition]{Proposition}
\newtheorem{lemma}[definition]{Lemma}
\newtheorem{theorem}[definition]{Theorem}
\newtheorem{assumption}[definition]{Assumption}
\numberwithin{equation}{section}
\theoremstyle{remark}
\newtheorem{remark}[definition]{Remark}
\newtheorem{example}[definition]{Example}
\def\PP{\mathsf P}
\def\RR{\mathbb R}
\def\ZZ{\mathbb Z}
\def\pr{\mathrm{pr}}
\def\supp{\mathrm{supp}}
\def\id{\mathrm{id}}
\def\IM{\mathrm{Im}}
\def\CC{\mathcal C}
\def\DD{\mathcal D}
\def\FF{\mathcal F}
\def\LLL{\mathcal L}
\def\LL{\pmb{\lor}}
\def\ox{\overline{x}}
\def\ux{\underline{x}}
\begin{document}
\title{Copulas for maxmin systems}

\author{Matija Vidmar}
\address{Department of Mathematics, University of Ljubljana and Institute of Mathematics, Physics and Mechanics, Slovenia}
\email{matija.vidmar@fmf.uni-lj.si}

\author{Matja\v{z} Omladi\v{c}}
\address{Institute of Mathematics, Physics and Mechanics, Ljubljana, Slovenia}
\email{matjaz@omladic.net}

\begin{abstract}
Under a mild condition we give closed-form expressions for copulas of systems that consist of maxima and of minima of subvectors of a given random vector $X$ with continuous marginals. Said expressions appear explicit in the copula of $X$ and the mentioned condition is for example met when the law of $X$ admits a strictly positive density with respect to Lebesgue measure. 
In the i.i.d. case these ``maxmin'' copulae become universal and the conditions on their validity can be dropped entirely. Our main motivation comes from applications to shock models that arise in multivariate survival theory, and indeed the maxmin copulas presented herein are connected to/extend the Marshall-Olkin copulas \cite{MO67} going through to the copulas given in \cite{OmlRuz15}. Another application is to order statistics copulas. 
\end{abstract}

\keywords{Copulae; max and min systems; transformations of distribution functions; POD property; survival analysis; shock models}
\subjclass[2010]{Primary: 	60E05; Secondary: 60E15, 62N05}

\maketitle

\section{ Introduction}
As already mentioned in the abstract, our main motivation are shock models as described in \cite{MO67,OmlRuz15,DurOmlOraRuz15}, see also the citations given there. These models are part of survival analysis, an area of Statistics, closely related to reliability theory, event history theory, and possibly some other similar theories, that have a substantial theoretical part in common, and whose name may depend on whether they are dealing primarily with problems coming from life sciences, engineering, economics, sociology, or somewhere else. The univariate case of these theories is to do with notions such as survival function (or reliability function), lifetime distribution function, event density, and hazard function, whilst the multivariate case uses most of the known multivariate statistical methods which include copulas -- functions linking univariate distributions into a multivariate one. 

We refer to \cite{nelsen} for the general theory of copulas and to \cite{joe} for multivariate models with emphasis on copulas. Indeed, a natural application of copulas in survival analysis comes through shock models and was probably for the first time studied in \cite{MO67}. The maxmin copulas that we introduce, may be seen as an immense extension of the Marshall-Olkin copulas \cite{MO67}. A consequence of our results on the maxmin copulas is also the closed form order statistic copula which is an extension of the bivariate results from \cite{schmitz,anjos} and a complement to several other results on this theme \cite{jaworski,mendes}.

Here is now a shock model of the kind that our theory may be applied to. A random vector $Y=(Y_i)_{i=1}^n$ is given, representing, say, lifetimes of certain organisms in life sciences or of certain components of a system in engineering, or perhaps times-to-default of certain financial entities in a portfolio assessed under credit risk evaluation. These variables may be seen as idiosyncratic shocks, i.e.\ they represent the individual life times of, say, the components of the system. Furthermore, we have a random vector $Z=(Z_j)_{j=1}^m$ representing external shocks, sometimes also called exogenous or systemic shocks. For any two indices $i,j,1\leqslant i\leqslant n, 1\leqslant j\leqslant m$, we know the effect of the shock $Z_j$ on the $i$-th component. The shock may bring an event that is detrimental to it so that the random variable representing the resulting lifetime equals $\min\{Y_i,Z_j\}$. On the other hand, if this shock brings a beneficial event to the component, the resulting lifetime equals $\max\{Y_i, Z_j\}$. Beneficial shocks may appear in applications, say in engineering, when the component of the system has a recovery option for the given shock. The reader may find more on this including some concrete examples in \cite{OmlRuz15, DurOmlOraRuz15}.

The solution to the problem of the probabilistic dependency structure of the resulting lifetimes, as described in the previous paragraph, brought us to the study of copulas for systems consisting of minima and of maxima of subvectors of a given random vector (maxmin systems). These in turn can be investigated successfully under relatively mild conditions (see Assumption~\ref{assumption}) and, while in general, though not in the i.i.d. case, lacking some measure of aesthetic appeal, appear explicit and computationally tractable; a fact that may be of use to practitioners who are interested in various properties that copulas may or may not have and that are discussed extensively in \cite{nelsen}. We present some flavor of that in Subsections~\ref{subsection:POD} and~\ref{subsection:zero}. The explicit formula itself, called the \textbf{maxmin copula}, is given in relative generality (modulo only Assumption~\ref{assumption}) in \eqref{main} of Subsection~\ref{subsection:main_result} and is our man result, while in Subsection~\ref{subsection:survival} we propose to view this formula as a transformation and introduce the copula, obtained from a given copula, by applying the maxmin transformation $m$ times. We show that these copulas, in principle still obtainable in a closed form, called \textbf{$m$-fold maxmin copulas}, solve the problem of the shock model above.

It is true that our primary motivation lies in these applications. Nevertheless, the explicit solution to this ``maxmin problem'' that we present here, as we believe, may deserve to have been discovered in its own right. Indeed our results, including Lemma~\ref{lemma:key} --- an ancillary claim characterizing when one distribution function can be transformed into another monotonically and continuously, or just monotonically ---, should also be of some independent interest for the development of probabilistic concepts.

Here is how the remainder of our paper is organized. Section~\ref{section:preliminaries} gives the setting and fixes notation. Our main results are then developed in Section~\ref{section:main}, where we first solve the maxmin problem on the level of probabilities  \eqref{first_step}, and reach the final result \eqref{main} after introducing the relevant ``distortion functions''. We also exhibit simplifications of this formula in a couple of special cases. In Section~\ref{section:properties} we present some properties and applications. In particular,  Subsection~\ref{subsection:order} renders our application to order statistics, Subsection~\ref{subsection:survival} comments on the successive formations of maxmin systems and stages an extension of the Marshall-Olkin copulas and the copulas introduced in \cite{OmlRuz15}, finally Subsection~\ref{subsection:low-dimensional} makes explicit some low-dimensional examples. The development of auxiliary tools is deferred to the Appendix, i.e.\ to Section~\ref{section:auxiliary}.

\section{Setting, assumptions, notation}\label{section:preliminaries}

Throughout a probability space $(\Omega,\FF,\PP)$ remains fixed. Let $n\in \mathbb{N}$, and let $X=(X_i)_{i=1}^n$ be a random vector, with values in $\mathbb{R}^n$, $C$ a copula thereof (so that, in the obvious notation, $F_X=C\circ (F_{X_i}\circ \pr_i)_{i=1}^n$). Let also the sets $\CC$ and $\DD$ both consist of non-empty subsets of $[n]:=\{1,\ldots,n\}$, and let them not both be empty, i.e. $\CC\cup \DD\ne \emptyset$ and each element of $\CC\cup \DD$ is a non-empty subset of $[n]$.

Our aim is then to consider the maxmin system $X^{\lor\land}:=((\lor_MX)_{M\in \CC},(\land_MX)_{M\in \DD})$, where for non-empty $M\subset [n]$ we have denoted $\lor_MX:=\max\{X_i:i\in M\}$ and $\land_MX:=\min\{X_i:i\in M\}$.
Specifically, our primary objective is to provide an expression (in terms of $C$ and the marginal distributions $F_i$, $i\in [n]$) for the copula of this $\vert\CC\vert +\vert\DD\vert$-dimensional random vector. We will succeed in doing so under the following (technical) assumption on the relevant distribution functions (we denote for short by $F_i$ the distribution function $F_{X_i}$ of $X_i$, $i\in [n]$):

\begin{assumption}\label{assumption}
For all $i\in [n]$ (1) the $F_i$, $i\in [n]$, are continuous /which implies that $C$ is unique/; (2) for $M$ from $\DD$, respectively $\CC$, whenever $i\in M$, then the intervals of constancy of $F_{\land_MX}$, respectively $F_{\lor_MX}$, are also intervals of constancy of $F_i$.
\end{assumption}

\begin{remark}\leavevmode\label{remark:stable}
\begin{enumerate}[(a)]
\item The situation described in Assumption~\ref{assumption} occurs if the law of $X$ admits a strictly positive density with respect to Lebesgue measure (in which case all of the distribution functions mentioned therein are in fact strictly increasing).
\item\label{remark:stable:b} Property (1) of Assumption~\ref{assumption} is stable under the formation of the maxmin system, i.e. if $X$ consists of continuous random variables, then so does $X^{\lor\land}$ (cf. e.g. the first paragraph of Subsection~\ref{subsection:distortion}). As regards Property (2) of Assumption~\ref{assumption} we note as follows:
\begin{quote}
Suppose (i) the $F_i$, $i\in [n]$, share their intervals of constancy and (ii) for all $a<b$ from $\mathbb{R}$: if $dF_i(a,b]=\PP(X_i\in (a,b])>0$ for some (then all) $i\in [n]$, then $\PP(X\in (a,b]^n)>0$ (this of course obtains if $X$ consists of i.i.d. random variables). Then if the random variable $Y$ is \emph{any} non-empty combination of infima and suprema of the random variables constituting $X$ (in whatever order, and regardless of the placing of the parantheses), i.e. if $Y$ is one of the random variables obtained in the successive formation of the maxmin systems, starting with $X$, then $F_Y$ shares its intervals of constancy with the $F_i$, $i\in [n]$.
\end{quote}
\begin{proof}
For, if $dF_i(a,b]>0$, for some (then all) $i\in [n]$, then by (ii), thanks to $\{Y\in (a,b]\}\supset \cap_{i\in [n]}\{X_i\in (a,b]\}$, and from the monotonicity of probability measures, we obtain $dF_Y(a,b]>0$. Conversely if $dF_Y(a,b ]>0$, then if $dF_i(a,b]=0$ for some (then all) $i\in [n]$, a contradiction would result with $\{Y\in (a,b]\}\subset \cup_{i\in [n]}\{X_i\in (a,b]\}$ via sub-additivity of probability measures.
\end{proof}
Thus if $X$ satisfies (i) and (ii) (in particular, if $X$ is i.i.d.), then all the successive formations of the maxmin systems, starting from $X$, will verify property (2) of Assumption~\ref{assumption} also (cf. Subsection~\ref{subsection:survival}).
\item To see how the provisions of Assumption~\ref{assumption} can fail take $n=2$, $X_1$ uniform continuous on $[0,1]$, independent of $X_2$ that is uniform continuous on $[1/2,3/2]$, $\DD=\emptyset$ and $\CC=\{\{2\},\{1,2\}\}$, say. Then $F_{X_1\lor X_2}$ is constant on $(-\infty,1/2)$ but $F_{X_1}$ is not. 
\end{enumerate}
\end{remark}
For brevity of expression we shall assume the enforcement of Assumption~\ref{assumption} throughout, except where otherwise explicitly stated.

We conclude this section by fixing some general notation and agreeing on some conventions. For sets $B$ and $A$, $B^A$ will stand for the set of all functions from $A$ to $B$; $\vert A\vert$ for the size (cardinality) of $A$; and (by a slight abuse of notation) $2^{A}$ will denote the power set of a set $A$. For a distribution function $H$, $H(-\infty):=0$ and $H(\infty):=1$; whilst $H_l^{-1}$ will denote the left continuous \cite[p. 28, paragraph following proof of Proposition~4]{fristedt} inverse of $H$, $H_l^{-1}:(0,1)\to\mathbb{R}$, $H_l^{-1}(g):=\inf\{x\in \mathbb{R}:H(x)\geq g\}$ for $g\in (0,1)$. For a finite collection of extended real numbers $(a_i)_{i\in M}$, $\land_{i\in M}a_i:=\min\{a_i:i\in M\}$ and $\lor_{i\in M}a_i:=\max\{a_i:i\in M\}$. For $z\in \mathbb{R}^{[n]}$, $\sum z:=\sum_{i=1}^nz_i$. Next, we will adhere to the usual conventions that $\lor_\emptyset=-\infty$ \& $\land_\emptyset =+\infty$ or $\lor_\emptyset=0$ \& $\land_\emptyset =1$, depending on the context; and that $a^0\equiv 1$ in $a\in [-\infty,+\infty]$. Finally, as hitherto, for a random vector (in particular, variable) $Z$, $F_Z$ is its distribution function; and i.i.d. abbreviates ``independent and identically distributed''.

\section{The maxmin copula}\label{section:main}
\subsection{Step 1: Expressing $F_{X^{\lor\land}}$} We begin by finding the form of the distribution function for the system $X^{\lor\land}$. To this end let $x^{\lor\land}=((x^\lor_M)_{M\in \CC},(x^\land_M)_{M\in\DD})\in \RR^{ \CC}\times \RR^{\DD}$ be a collection of real numbers. By the very definition of the distribution function of a random vector
\begin{equation}\label{distribution}
    F_{X^{\lor\land}}(x^{\lor\land})=\PP\left(\bigcap_{M\in \CC}\left\{\lor_MX\leq x^\lor_M\right\}\cap \bigcap_{M\in \DD}\left\{\land_MX\leq x^\land_M\right\}\right)
\end{equation}
(where we understand $\cap_\emptyset=\Omega$). Denote now, provisionally, $\LL:=\cap_{M\in \CC}\{\lor_MX\leq x^\lor_M\}$. Using de Morgan's law and taking differences of events, \eqref{distribution} becomes equal to
$$\PP\left(\left(\bigcup_{M\in \DD}\left\{\land_MX> x^\land_M\right\}\right)^c\cap \LL\right)
=\PP(\LL)-\PP\left(\bigcup_{M\in\DD}\left\{\land_MX> x^\land_M\right\}\cap \LL\right).$$
An application of the inclusion-exclusion principle for the measure $\PP(\cdot\cap \LL)$ then turns this expression into
$$\PP(\LL)-\sum_{k=1}^{\vert\DD\vert}(-1)^{k+1}\sum_{\substack{I\subset \DD,\\ \vert I\vert=k}}\PP\left(\LL\cap \bigcap_{M\in I}\{\land_MX>x^\land_M\}\right).$$
Now $\LL=\cap_{i=1}^n\{X_i\leq \underset{\substack{M\in \CC,\\M\ni i}}{\bigwedge}x^\lor_M\}$, whilst $\bigcap_{M\in I}\{\land_MX>x^\land_M\}=\bigcap_{i=1}^n\left\{X_i>\underset{\substack{M\in I,\\ M\ni  i}}{\bigvee}x^\land_M\right\}$. We then see -- via another application of inclusion-exclusion -- that \eqref{distribution} in fact equals
$$
C\left(\left(F_i\left(\underset{\substack{M\in \CC,\\M\ni i}}{\bigwedge}x^\lor_M\right)\right)_{i=1}^n\right)-\sum_{k=1}^{\vert\DD\vert}(-1)^{k+1}\sum_{\substack{I\subset \DD,\\ \vert I\vert=k}}\sum_{z\in \{0,1\}^{[n]}}(-1)^{\sum z}
$$ $$
C\left(\left(F_i\left(\left(\underset{\substack{M\in \CC,\\M\ni i}}{\bigwedge}x^\lor_M\right)^{1-z_i}\left(\left(\underset{\substack{M\in \CC,\\M\ni i}}{\bigwedge} x^\lor_M\right) \land
\left(\underset{\substack{M\in I,\\ M\ni  i}}{\bigvee}x^\land_M\right)\right)^{z_i}\right)\right)_{i=1}^n\right).
$$
Assume now, for notational brevity, that $\DD\ne \emptyset$ (we defer the discussion of the (simpler) case when $\DD=\emptyset$ to Remark~\ref{remark:only_sup}). Since by the binomial theorem $1-\sum_{k=1}^{\vert \DD\vert}(-1)^{k+1}{\vert\DD\vert\choose k}=(1-1)^{\vert\DD\vert}= \delta_{0\vert\DD\vert}=0$, this equals further
$$
\sum_{k=1}^{\vert\DD\vert}\sum_{\substack{I\subset \DD,\\ \vert I\vert=k}}\sum_{\substack{z\in \{0,1\}^{[n]}\\ z\not\equiv 0}}\!\!\!\!(-1)^{k+\sum z}C\left(\left(F_i\left(\left(\underset{\substack{M\in \CC,\\M\ni i}}{\bigwedge}x^\lor_M\right)^{1-z_i}\left(\left(\underset{\substack{M\in \CC,\\M\ni i}}{\bigwedge}x^\lor_M\right)
\land
\left(\underset{\substack{M\in I,\\ M\ni  i}}{\bigvee}x^\land_M\right)\right)^{z_i}\right)\right)_{i=1}^n\right),
$$
which /using the fact that for any nondecreasing real-valued (in particular, any distribution) function $F$ and any reals $x$ and $y$, $F(x\land y)=F(x)\land F(y)$, and the same with $\lor$ replacing $\land$/ may finally be rewritten as
\begin{equation}\label{first_step}
\sum_{\substack{I\in 2^\DD\backslash\{\emptyset\}\\z\in \{0,1\}^{[n]}\backslash \{0\}}}\!\!\!\!\!\!\!\!\!\!\!(-1)^{\vert I\vert+\sum z}C\left(\left(\left(\underset{\substack{M\in \CC,\\M\ni i}} {\bigwedge}F_i(x^\lor_M)\right)^{1-z_i}\left(\left(\underset{\substack{M\in \CC,\\M\ni i}} {\bigwedge}F_i(x^\lor_M)\right)\land
    \left(\underset{\substack{M\in I,\\ M\ni  i}}{\bigvee}F_i(x^\land_M)\right)\right)^{z_i}\right)_{i=1}^n\right)
\end{equation}
(where now $\land_\emptyset$ should be understood as $1$, and $\lor_\emptyset$ as $0$).

\subsection{Step 2: Introduction of distortion functions and definition of maxmin copula}\label{subsection:distortion}
We have expressed in the previous subsection the joint distribution function $F_{X^{\lor\land}}(x^{\lor\land})$ given in \eqref{distribution} as a function of the marginal distributions \eqref{first_step}. In order to extract from this expression the relevant copula, we introduce the so-called distortion functions as in, say \cite{OmlRuz15} and \cite{DurOmlOraRuz15}.
We begin with the observation that, for each $M\in 2^{[n]}\backslash \{\emptyset\}$, \begin{equation}\label{dist:max}
F_{\lor_MX}=C\circ ((F_i\lor \mathbbm{1}_{[n]\backslash M}(i))_{i=1}^n),
\end{equation}
while thanks to inclusion-exclusion 
\begin{equation}\label{dist:min}
F_{\land_MX}=\sum_{j=1}^{\vert M\vert}(-1)^{j+1}\sum_{\substack{J\subset M,\\\vert J\vert=j}}C((F_i\lor \mathbbm{1}_{[n]\backslash J}(i))_{i=1}^n).
\end{equation}
It follows in particular that each $F_{\lor_MX}$ as well as $F_{\land_M X}$ is continuous (copulae being continuous \cite[p. 46, Theorem~2.10.7]{nelsen}).

Next, according to Lemma~\ref{lemma:key} from the appendix and thanks to the provisions of Assumption~\ref{assumption}, for $M$ from $\CC$, respectively $\DD$, $i\in M$, we may find a unique function $\Phi^{\lor}_{M,i}$, respectively $\Phi^{\land}_{M,i}$, that is continuous nondecreasing, mapping $[0,1]$ into $[0,1]$, and such that
$$F_i=\Phi^{\lor}_{M,i}\circ F_{\lor_M X},\text{ respectively }F_i=\Phi^{\land}_{M,i}\circ F_{\land_M X}.$$
It follows indeed from Remark~\ref{remark}\ref{remark:constructive} that \begin{equation}\label{distortions}
\Phi^\lor_{M,i}=F_i\circ (F_{\lor_M X})_l^{-1},\text{ respectively }\Phi^\lor_{M,i}=F_i\circ (F_{\lor_M X})_l^{-1},
\end{equation}
extended by zero at $0$ and unity at $1$ (recall $H_l^{-1}$ denotes the left-continuous inverse of a distribution function $H$). 

Finally, defining for $v^{\lor\land}=((v^\lor_M)_{M\in \CC},(v^\land_M)_{M\in\DD})\in [0,1]^{\CC}\times [0,1]^{\DD}$, the \textbf{maxmin copula} $C^{\lor\land}(v^{\lor\land})$ as equal to
\begin{equation}\label{main}
\sum_{\substack{I\in 2^\DD\backslash\{\emptyset\}\\z\in \{0,1\}^{[n]}\backslash \{0\}}}\!\!\!\!\!\!\!\!\!\!(-1)^{\vert I\vert+\sum z}C\left(\left(\left(\underset{\substack{M\in \CC,\\M\ni i}}{\bigwedge}\Phi_{M,i}^\lor(v^\lor_M)\right)^{1-z_i}\left(\left(\underset{\substack{M\in \CC,\\M\ni i}}{\bigwedge}\Phi_{M,i}^\lor(v^\lor_M)\right)\land
    \left(\underset{\substack{M\in I,\\ M\ni  i}} {\bigvee}\Phi_{M,i}^\land(v^\land_M)\right)\right)^{z_i}\right)_{i=1}^n\right)
\end{equation}
(again under the conventions $\land_\emptyset=1$, $\lor_\emptyset=0$), we see that
\begin{equation}\label{copula-relation}
F_{X^{\lor\land}}=C^{\lor\land}\circ ((F_{\lor_MX}\circ \pr^\lor_M)_{M\in \CC},(F_{\land_MX}\circ \pr^\land_M)_{M\in \DD}),
\end{equation}
where for $M$ from $\CC$, respectively $\DD$, $\pr^\lor_M: \RR^{\CC}\times \RR^{\DD}\to \RR$, respectively $\pr^\land_M:\RR^{\CC}\times \RR^{\DD}\to \RR$, is the canonical projection $((x^\lor_M)_{M\in \CC},(x^\land_M)_{M\in  \DD})\mapsto x^\lor_M$, respectively $((x^\lor_M)_{M\in \CC},(x^\land_M)_{M\in  \DD})\mapsto x^\land_M$.

\subsection{Main result}\label{subsection:main_result}
Since $C^{\lor\land}$ is manifestly continuous and since, thanks to the continuity of $F_{\lor_MX}$ and $F_{\land_MX}$, $\overline{\IM F_{\lor_MX}}=\overline{\IM F_{\land_MX}}=[0,1]$ for each $M\in 2^{[n]}\backslash \{\emptyset\}$, we conclude from \eqref{copula-relation} and the apposite properties of distribution functions (limits at $-\infty$, $+\infty$, inclusion-exclusion), that $C^{\lor\land}$ as given by \eqref{main} is in fact a copula -- it is then the unique (thanks to continuity of the marginals) copula for the system $X^{\lor\land}$ as given by Sklar's theorem.

\begin{theorem}\label{theorem}
Assuming $\DD\ne \emptyset$, $C^{\lor\land}$ as given by \eqref{main}-\eqref{distortions}-\eqref{dist:min}-\eqref{dist:max} is the copula for $X^{\lor\land}$.
\end{theorem}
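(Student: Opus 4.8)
The plan is to certify that the function $C^{\lor\land}$ of \eqref{main} is indeed \emph{the} copula of $X^{\lor\land}$ by assembling two ingredients, after which Sklar's theorem does the rest: first, that $C^{\lor\land}$ reproduces the joint law through the one-dimensional marginals, i.e. that the factorization \eqref{copula-relation} genuinely holds; and second, that $C^{\lor\land}$ satisfies the copula axioms on the whole cube $[0,1]^{\CC}\times[0,1]^{\DD}$. Uniqueness is then automatic from the continuity of the marginals $F_{\lor_MX}$, $F_{\land_MX}$ (themselves continuous by \eqref{dist:max}--\eqref{dist:min} and the continuity of copulae).

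First I would establish \eqref{copula-relation} by direct substitution. Evaluating the right-hand side of \eqref{copula-relation} at a point $x^{\lor\land}$ amounts to inserting $v^\lor_M=F_{\lor_MX}(x^\lor_M)$ and $v^\land_M=F_{\land_MX}(x^\land_M)$ into \eqref{main}. The defining relations of the distortion functions, $F_i=\Phi^\lor_{M,i}\circ F_{\lor_MX}$ for $M\in\CC$, $i\in M$, and $F_i=\Phi^\land_{M,i}\circ F_{\land_MX}$ for $M\in\DD$, $i\in M$, then turn every occurrence of $\Phi^\lor_{M,i}(v^\lor_M)$ into $F_i(x^\lor_M)$ and every occurrence of $\Phi^\land_{M,i}(v^\land_M)$ into $F_i(x^\land_M)$. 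Term by term, \eqref{main} thereby collapses exactly to \eqref{first_step}, which was shown in Step~1 to equal $F_{X^{\lor\land}}(x^{\lor\land})$. This is a bookkeeping verification rather than a genuine difficulty, the only point requiring care being that the distortions are applied precisely over the index ranges ($M\in\CC$, resp. $M\in I\subset\DD$, with $M\ni i$) on which they are defined.

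Next I would argue that $C^{\lor\land}$ is a copula. It is manifestly continuous, being a finite alternating sum of compositions of the continuous copula $C$ with the continuous maps $\Phi^\lor_{M,i}$, $\Phi^\land_{M,i}$ and the continuous operations $\land$, $\lor$ and raising to the exponents $z_i,1-z_i\in\{0,1\}$. Because each marginal $F_{\lor_MX}$, $F_{\land_MX}$ is a continuous distribution function, the intermediate value theorem gives $\overline{\IM F_{\lor_MX}}=\overline{\IM F_{\land_MX}}=[0,1]$, so the image of the marginal map in \eqref{copula-relation} is dense in $[0,1]^{\CC}\times[0,1]^{\DD}$. On this dense set $C^{\lor\land}$ coincides, through \eqref{copula-relation}, with the genuine joint distribution function $F_{X^{\lor\land}}$ expressed in its own marginals; hence the copula axioms for $C^{\lor\land}$ --- groundedness, the uniform-marginal condition, and the $d$-increasing (rectangle-inequality) property --- can each be read off from the corresponding property of the bona fide distribution function $F_{X^{\lor\land}}$ on that dense set and then extended to the whole cube by continuity. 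Concretely, groundedness and the marginal condition follow by sending the relevant $x$-coordinates to $\mp\infty$ (using $F_Z(-\infty)=0$, $F_Z(+\infty)=1$), while the rectangle inequalities for boxes with corners in the dense set are exactly the nonnegative $F_{X^{\lor\land}}$-volumes of the corresponding preimage boxes.

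I expect the main obstacle to be this last transfer step --- upgrading ``$C^{\lor\land}$ agrees with a distribution function on a dense set'' to ``$C^{\lor\land}$ is a copula everywhere.'' The subtlety is that the marginals need not be injective (they may have flat stretches, which is precisely why Assumption~\ref{assumption} and the left-continuous inverses in \eqref{distortions} were needed), so one cannot simply invert them; the argument must run through density together with the continuity of $C^{\lor\land}$, checking that the boundary faces $\{v_k=0\}$ and $\{v_k=1\}$ and the $d$-increasing inequalities all pass to the closure. Once this is in place, Sklar's theorem identifies $C^{\lor\land}$ as the unique copula of $X^{\lor\land}$, completing the proof.
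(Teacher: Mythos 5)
Your proposal is correct and follows essentially the same route as the paper: establish the factorization \eqref{copula-relation} by substituting the distortion identities into \eqref{first_step}, then use the continuity of $C^{\lor\land}$ together with the density of $\IM F_{\lor_MX}$ and $\IM F_{\land_MX}$ in $[0,1]$ to transfer the distribution-function properties (groundedness, marginals, rectangle inequalities) to the whole cube, and invoke Sklar's theorem for uniqueness. The paper compresses this into a single paragraph preceding the theorem statement; your write-up merely makes the density-plus-continuity transfer step explicit.
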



\begin{remark}\label{remark:only_sup}
When $\DD=\emptyset$, but $\CC\ne\emptyset$, following the above reasoning, the copula for the system $X^\lor:=(\lor_M X)_{M\in\CC}$ is given by $$C^\lor(v^\lor)=C\left(\left(\bigwedge_{\substack{M\in \CC,\\ M\ni i}}\Phi^\lor_{M,i}(v^\lor_M)\right)_{i=1}^n\right),\quad v^\lor=(v^\lor_M)_{M\in \CC}\in [0,1]^{\CC}.$$
\end{remark}
Note that without the continuity of the $F_{\lor_MX}$ and $F_{\land_MX}$ and also of the $\Phi^{\lor}_{M,i}$ and $\Phi^{\land}_{M,i}$, it would have been very difficult (cumbersome at the very least) to check whether or not $C^{\lor\land}$ defines a copula. Indeed without access to the $\Phi$s it would not even have been clear how to define $C^{\lor\land}$ in the first place.  In view of Lemma~\ref{lemma:key} then, and at least within the confines of the methods used above, the provisions of Assumption~\ref{assumption} appear to be the minimal possible for the result to still obtain (but see the i.i.d. case).

\subsection{Special case -- $X$ is an independency} We assume here $C$ is the product copula, i.e. the $X_i$, $i\in [n]$, are independent. In this case Assumption~\ref{assumption} is verified if e.g. each $F_i$ is absolutely continuous with a strictly positive density, 
and we have for $u^{\lor\land}=((u^\lor_M)_{M\in \CC},(u^\land_M)_{M\in\DD})\in [0,1]^{\CC}\times [0,1]^{\DD}$ the following simplified form of the maxmin copula (which holds even if $\DD=\emptyset$, but $\CC\ne\emptyset$):

\begin{equation}\label{main:independent}
C^{\lor\land}(u^{\lor\land})=
\sum_{I\in 2^\DD}(-1)^{\vert I\vert}\prod_{i=1}^n\left(\left(\underset{\substack{M\in \CC,\\M\ni i}}{\bigwedge}\Phi^\lor_{M,i}(u^\lor_M)\right)-\left(\underset{\substack{M\in I,\\ M\ni  i}}{\bigvee}\Phi^\land_{M,i}(u^\land_M)\right)\right)^+.
\end{equation}

\subsection{i.i.d. case}
A remarkable (though not unexpected) further simplification occurs if we suppose in addition to independency that $F_i=F$ for all $i\in [n]$.  Note in this case Assumption~\ref{assumption} is verified for $F_i$, $i\in [n]$, that are continuous. Moreover, for each $M\in 2^{[n]}\backslash \{\emptyset\}$, $F_{\lor_MX}=\cdot^{\vert M\vert}\circ F$ and $1-F_{\land_MX}=(1-\cdot)^{\vert M\vert}\circ F$, so that we obtain for $M$ from $\CC$, respectively $\DD$, all $i\in M$, and on $(0,1)$, $$(F_{\lor_MX})^{-1}_l=F^{-1}_l\circ \sqrt[\vert M \vert]{\cdot}, \text{ respectively } (F_{\land_MX})^{-1}_l=F^{-1}_l\circ (1-\sqrt[\vert M\vert]{1-\cdot}).$$ Noting next that for a continuous distribution function $G$ one has $G\circ G^{-1}_l=\id_{(0,1)}$, the definition of the functions $\Phi$ implies
$$\Phi^{\lor}_{M,i}:=\sqrt[\vert M \vert]{\cdot},\text{ respectively } \Phi^{\land}_{M,i}:=(1-\sqrt[\vert M\vert]{1-\cdot}).$$
We conclude that in this case, for $u^{\lor\land}\in [0,1]^{\CC}\times [0,1]^{\DD}$,   $C^{\lor\land}(u^{\lor\land})$ is given by (again even if $\DD=\emptyset$, but $\CC\ne\emptyset$),
\begin{equation}\label{main:iid}
\mathbf{C^{\lor\land}}(u^{\lor\land}):=
\sum_{I\in 2^\DD}(-1)^{\vert I\vert}\prod_{i=1}^n\left(\underset{\substack{M\in \CC,\\M\ni i}}{\bigwedge}\sqrt[\vert M\vert]{u^\lor_M}+\underset{\substack{M\in I,\\ M\ni  i}}{\bigwedge}\sqrt[\vert M\vert]{1-u^\land_M}-1\right)^+,
\end{equation}
which in particular is universal (no longer dependent on $F$), and will be denoted (as indicated) simply by $\mathbf{C^{\lor\land}}$ below (though in general reference should be made in the notation to $\CC$, $\DD$ and $n$).

Indeed the latter observation, on the universality of $C^{\lor\land}$, allows us to make yet another. Maintain the i.i.d. case, but drop Assumption~\ref{assumption}. Possibly by extending the probability space, let $(Y^m)_{m\in\mathbb{N}}$ be a sequence of random vectors, independent of $X$, with independent continuous marginals, whose laws are  identical and converge weakly to $\delta_0$, as $m\to \infty$. For $m\in \mathbb{N}$, the random vector $X^m:=X+Y^m$ has continuous and i.i.d. marginals, and $X^m$ converges weakly to $X$ by the continuous mapping theorem, as $m\to\infty$. Therefore $F_{{(X^m)}^{\lor\land}}=\mathbf{C^{\lor\land}}\circ ((F_{\lor_MX^m}\circ \pr^\lor_M)_{M\in \CC},(F_{\land_MX^m}\circ \pr^\land_M)_{M\in \DD})$. Let $m\to \infty$. By the continuous mapping theorem and the continuity of $\mathbf{C^{\lor\land}}$, we conclude that  $F_{{X}^{\lor\land}}=\mathbf{C^{\lor\land}}\circ ((F_{\lor_MX}\circ \pr^\lor_M)_{M\in \CC},(F_{\land_MX}\circ \pr^\land_M)_{M\in \DD})$ at every point that is, component by component, a continuity point of each of the $F_{\lor_MX}$, $M\in \CC$, $F_{\land_MX}$, $M\in \DD$ (hence a continuity point of $F_{X^{\lor\land}}$ \cite[p. 164]{durrett}). So for each $x\in \mathbb{R}^{\CC}\times \mathbb{R}^{\DD}$, there exists a sequence of real numbers $(a_m)_{m\geq 1}$ nonincreasing to zero, such that for all $m\in \mathbb{N}$, $x+a_m\mathbbm{1}$ is, component by component, a continuity point of each of the $F_{\lor_MX}$, $M\in \CC$, $F_{\land_MX}$, $M\in \DD$. Thanks to the right-continuity of distribution functions, and the continuity of $\mathbf{C^{\lor\land}}$, we may pass to the limit and find that $F_{{X}^{\lor\land}}=\mathbf{C^{\lor\land}}\circ ((F_{\lor_MX}\circ \pr^\lor_M)_{M\in \CC},(F_{\land_MX}\circ \pr^\land_M)_{M\in \DD})$ holds at $x$, so everywhere, i.e. $\mathbf{ C^{\lor\land}}$, having already been established to be a copula (since in general there are continuous distribution functions $F$), is a copula for $X^{\lor\land}$ (though there may be others):

\begin{proposition}
Dropping Assumption~\ref{assumption}, but insisting $X$ is i.i.d., $\mathbf{C^{\lor\land}}$ as given by \eqref{main:iid}  is a copula for $X^{\lor\land}$.
\end{proposition}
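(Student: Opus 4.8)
The plan is to establish that $\mathbf{C^{\lor\land}}$, already known to be a genuine copula (since there exist continuous distribution functions $F$ to which the i.i.d.\ case with Assumption~\ref{assumption} applies), serves as a copula for $X^{\lor\land}$ even when the marginals of the i.i.d.\ vector $X$ are merely continuous, or indeed arbitrary. The key idea is a \emph{regularization-and-limit} argument: approximate the given $X$ by i.i.d.\ vectors $X^m$ that \emph{do} satisfy Assumption~\ref{assumption}, apply the already-proved copula relation \eqref{copula-relation} to each $X^m$, and pass to the limit, exploiting the universality of $\mathbf{C^{\lor\land}}$ (its independence of $F$, established via \eqref{main:iid}) together with the continuity of $\mathbf{C^{\lor\land}}$.

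First I would construct the approximating sequence. Possibly after enlarging the probability space, I would take independent continuous perturbations $Y^m$, independent of $X$, with identical laws converging weakly to $\delta_0$, and set $X^m := X + Y^m$. Each $X^m$ is then i.i.d.\ with continuous (in fact, one can arrange absolutely continuous) marginals, so Assumption~\ref{assumption} holds for $X^m$; by Theorem~\ref{theorem} (and the i.i.d.\ specialization culminating in \eqref{main:iid}) we obtain the exact relation $F_{(X^m)^{\lor\land}} = \mathbf{C^{\lor\land}}\circ ((F_{\lor_MX^m}\circ \pr^\lor_M)_{M\in\CC}, (F_{\land_MX^m}\circ \pr^\land_M)_{M\in\DD})$, with the \emph{same} universal $\mathbf{C^{\lor\land}}$ for every $m$. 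Since $Y^m \Rightarrow \delta_0$, the continuous mapping theorem gives $X^m \Rightarrow X$, and hence weak convergence of the maxmin systems.

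Next I would pass $m\to\infty$. The right-hand side converges, by continuity of $\mathbf{C^{\lor\land}}$ and the convergence of the component distribution functions $F_{\lor_MX^m}\to F_{\lor_MX}$ and $F_{\land_MX^m}\to F_{\land_MX}$ (which holds at continuity points of the respective limits), while the left-hand side converges to $F_{X^{\lor\land}}$ at its continuity points. This yields the desired copula relation at every point whose coordinates are, component by component, continuity points of the relevant marginals $F_{\lor_MX}$ and $F_{\land_MX}$. To upgrade to \emph{all} points I would use a right-approximation: for arbitrary $x$, pick $a_m \downarrow 0$ so that each $x + a_m\mathbbm{1}$ is such a good point (possible since each distribution function has at most countably many discontinuities), verify the relation there, and then invoke right-continuity of distribution functions alongside continuity of $\mathbf{C^{\lor\land}}$ to let $a_m\downarrow 0$ and recover the relation at $x$ itself.

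The main obstacle is the upgrade from continuity points to all points, since the marginals $F_{\lor_MX}$ and $F_{\land_MX}$ need no longer be continuous once Assumption~\ref{assumption} is dropped, and so the naive limit identity is only guaranteed on a co-countable set of coordinates. The right-continuity argument resolves this cleanly, but it is the step requiring the most care: one must ensure that the approximating points $x + a_m\mathbbm{1}$ can be chosen simultaneously good in \emph{every} coordinate, which is feasible because the union over $M\in\CC\cup\DD$ of the discontinuity sets of the one-dimensional marginals is countable. Once this is in place, the conclusion that $\mathbf{C^{\lor\land}}$ is a copula for $X^{\lor\land}$ (though not necessarily the unique one, the marginals being possibly discontinuous) follows immediately from Sklar's theorem.
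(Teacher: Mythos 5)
Your proposal is correct and follows essentially the same route as the paper: regularize $X$ by adding independent continuous i.i.d.\ noise $Y^m$ with laws tending weakly to $\delta_0$, apply the already-established i.i.d.\ formula \eqref{main:iid} to $X^m:=X+Y^m$ (for which continuity of the marginals suffices to verify Assumption~\ref{assumption}), pass to the limit at continuity points via the continuous mapping theorem and the continuity of $\mathbf{C^{\lor\land}}$, and then upgrade to all points by approximating from the right along a sequence $x+a_m\mathbbm{1}$ of good points and invoking right-continuity. The paper's proof is exactly this argument, so no further comparison is needed.
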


\section{ Properties and applications }\label{section:properties}

\subsection{The POD property of the maxmin copula}\label{subsection:POD} It is intuitively appealing (at least when the starting vector $X$ is an independency) that taking maxima and minima should increase stochastic dependence. Let us make this precise in the context of the concept of positive orthant dependence \cite[p. 20, Subsection~2.1.1]{joe}. Remark that thanks to the continuity of the random variables in sight, positive orthant dependence of a random vector is
equivalent to the positive orthant dependence of its copula \cite[p. 187, Subsection~5.2.1]{nelsen}.

Now, in the context of Remark~\ref{remark:only_sup}, we observe that if $X$ is an iid vector, then $X^\lor$ is positive orthant dependent (POD). Indeed we have the slightly more general result:
\begin{proposition}\label{proposition:POD-for-maxima}
If $C$ is POD, and if for all $M\in\CC$, $x\in \mathbb{R}$ the probability of $\{X_i\leq x\text{ for all }i\in M\}$ is $\leq$ (hence $=$) $\prod_{i\in M}\PP(X_i\leq x)$, then $C^\lor$ is POD.
\end{proposition}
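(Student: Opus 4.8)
The plan is to verify the defining inequality of positive orthant dependence directly, namely that $C^\lor(v^\lor)\ge \prod_{M\in\CC}v^\lor_M$ for every $v^\lor\in[0,1]^{\CC}$ (by the equivalence recalled before the statement, POD of the continuous vector $X^\lor$ coincides with POD of its copula $C^\lor$). Since $\DD=\emptyset$ here, the working formula is the one from Remark~\ref{remark:only_sup}, so everything reduces to bounding $C\bigl((\bigwedge_{M\ni i}\Phi^\lor_{M,i}(v^\lor_M))_{i=1}^n\bigr)$ from below.

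First I would unwind the hypothesis. The assumption $\PP(X_i\le x\text{ for all }i\in M)\le\prod_{i\in M}\PP(X_i\le x)$, combined with POD of $C$ (whose $M$-margin is again POD and hence forces the reverse inequality, which is the parenthetical ``hence $=$''), says precisely that $F_{\lor_M X}(x)=\prod_{i\in M}F_i(x)$ for each $M\in\CC$ and all $x$. The key consequence I want to extract from this is the identity
\[
\prod_{i\in M}\Phi^\lor_{M,i}(v)=v,\qquad v\in[0,1],\ M\in\CC.
\]
I expect to obtain it by writing $\Phi^\lor_{M,i}=F_i\circ(F_{\lor_M X})^{-1}_l$ (as in \eqref{distortions}), setting $x=(F_{\lor_M X})^{-1}_l(v)$, and combining $\prod_{i\in M}F_i(x)=F_{\lor_M X}(x)$ with $F_{\lor_M X}\circ(F_{\lor_M X})^{-1}_l=\id_{(0,1)}$ (valid because $F_{\lor_M X}$ is continuous); the boundary values $v\in\{0,1\}$ are covered by the extension-by-$0$-and-$1$ convention.

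With this identity in hand the remainder is elementary. Applying POD of $C$ gives $C^\lor(v^\lor)\ge\prod_{i=1}^n\bigl(\bigwedge_{M\ni i}\Phi^\lor_{M,i}(v^\lor_M)\bigr)$. Since for numbers in $[0,1]$ the minimum dominates the product, one has $\bigwedge_{M\ni i}\Phi^\lor_{M,i}(v^\lor_M)\ge\prod_{M\ni i}\Phi^\lor_{M,i}(v^\lor_M)$ for each $i$ (indices $i$ lying in no $M\in\CC$ contribute an empty minimum equal to $1$, matching an empty product). Multiplying over $i$, reindexing the resulting double product over the pairs $(i,M)$ with $i\in M$ as $\prod_{M\in\CC}\prod_{i\in M}\Phi^\lor_{M,i}(v^\lor_M)$, and invoking the key identity, the right-hand side collapses to $\prod_{M\in\CC}v^\lor_M$, which is exactly what POD of $C^\lor$ demands.

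The genuinely load-bearing step is the key identity, where the special hypothesis on the $M$-diagonals enters; once it is established, the inequality $\min\ge\prod$ on $[0,1]$ does all of the remaining work, so I do not anticipate a serious obstacle beyond bookkeeping with the empty-index and boundary conventions.
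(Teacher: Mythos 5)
Your proposal is correct and follows essentially the same route as the paper: reduce via POD of $C$ to the inequality $\prod_{i=1}^n\bigwedge_{M\ni i}\Phi^\lor_{M,i}(v^\lor_M)\geq\prod_{M\in\CC}v^\lor_M$, and deduce that from $\prod_{i\in M}\Phi^\lor_{M,i}(v)\geq v$ (which you in fact establish with equality), obtained by substituting $v=F_{\lor_MX}(x)$ into the hypothesis. The only cosmetic difference is that the paper leaves the ``min dominates product'' bookkeeping implicit, while you spell it out.
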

\begin{proof}
The POD property of $C^\lor$, by definition, means $$C\left(\left(\bigwedge_{\substack{M\in \CC,\\ M\ni i}}\Phi^\lor_{M,i}(v^\lor_M)\right)_{i=1}^n\right)\geq \prod_{M\in\CC}v^\lor_M,\text{ for all }v^\lor=(v^\lor_M)_{M\in \CC}\in [0,1]^{\CC}.$$ Since $C$ is POD, it is sufficient to observe $$\prod_{i=1}^n\bigwedge_{\substack{M\in \CC,\\M\ni i}}\Phi^\lor_{M,i}(v^\lor_M)\geq \prod_{M\in\CC}v^\lor_M,\text{ for all }v^\lor=(v^\lor_M)_{M\in \CC}\in [0,1]^{\CC}.$$ But this follows from (indeed is equivalent to) $$\prod_{i\in M}\Phi^\lor_{M,i}(v)\geq v,\text{ for all }v\in (0,1)\text{ or for all }v\in [0,1],\text{ }M\in \CC.$$ The latter in turn is of course equivalent to the stated condition (writing $v=F_{\lor_MX}(x)$).
\end{proof}
There is, in view of the preceding result, no need to comment on the absurdity which would result from insisting, in general, on the negative orthant dependence of the system of maxima.

For the maxmin system, we have the following generalization, in the independent case:

\begin{proposition}
If $X$ is an independency (but dropping Assumption~\ref{assumption}), then $X^{\lor\land}$ is POD.
\end{proposition}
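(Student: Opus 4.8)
The plan is to read off the positive orthant dependence of $X^{\lor\land}$ from the classical theory of \emph{association} of random variables (in the sense of Esary, Proschan and Walkup), which is perfectly suited here: positive orthant dependence is a statement comparing the orthant probabilities of the law of $X^{\lor\land}$ against the products of its one-dimensional marginals, and as such it is well-defined and meaningful even when Assumption~\ref{assumption} is dropped and no copula for $X^{\lor\land}$ is available (because the marginals $F_{\lor_M X}$, $F_{\land_M X}$ need not be continuous). So I would not attempt to manipulate the explicit formula \eqref{main:independent}, whose derivation rests on Assumption~\ref{assumption}, but argue directly on the level of the law.

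I would rely on two standard facts from the theory of association: (i) a vector of independent random variables is associated; and (ii) any finite family of coordinatewise nondecreasing functions of an associated vector is again associated. The crucial --- and entirely elementary --- observation is then that for every non-empty $M\subset[n]$ both $\lor_M X=\max_{i\in M}X_i$ and $\land_M X=\min_{i\in M}X_i$ are coordinatewise nondecreasing functions of $X$: raising any $X_j$ can only raise (never lower) a maximum or a minimum, and these quantities are trivially nondecreasing (constant) in the coordinates outside $M$. Consequently $X^{\lor\land}=((\lor_M X)_{M\in\CC},(\land_M X)_{M\in\DD})$ is a vector of nondecreasing functions of the associated vector $X$, so by (ii) the vector $X^{\lor\land}$ is itself associated. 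Note it is essential here that \emph{both} the maxima and the minima are nondecreasing in $X$, which is what makes plain association --- rather than some mixed positive/negative notion --- exactly the right tool.

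It then remains to invoke the implication ``associated $\Rightarrow$ positive orthant dependent''. Writing $W:=X^{\lor\land}$ and feeding the nondecreasing indicators $f_j:=\mathbbm{1}_{(w_j,\infty)}$ into the multiplicative inequality $\EE[\prod_j f_j(W_j)]\geq\prod_j\EE[f_j(W_j)]$ --- which holds for nondecreasing $f_j$ and follows from the defining pairwise covariance inequality by an easy induction, grouping $\prod_{j<k}f_j$ (itself nondecreasing and nonnegative) against $f_k$ --- yields the upper-orthant inequality $\PP(W>w)\geq\prod_j\PP(W_j>w_j)$. The lower-orthant inequality $\PP(W\leq w)\geq\prod_j\PP(W_j\leq w_j)$ follows symmetrically, using that the nonincreasing indicators $\mathbbm{1}_{(-\infty,w_j]}$ are likewise positively correlated under association (equivalently, that $-W$ is again associated, since the covariance of two nonincreasing functions equals that of their negations, which are nondecreasing). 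Together these two inequalities are precisely the POD property of $X^{\lor\land}$.

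I expect no genuine obstacle in this argument; it is short and robust to the absence of Assumption~\ref{assumption}. The only two points requiring care are the bookkeeping of monotonicity directions (verifying that every one of the $\lor_M$ and $\land_M$ is nondecreasing in $X$, so that closure of association under monotone maps applies cleanly) and a stylistic decision: whether to cite the association literature as a black box or to make the note self-contained by including the one-paragraph induction that upgrades the pairwise covariance inequality to the product-of-expectations inequality used for the orthant bounds. I would lean towards the self-contained version, since the inductive step is immediate and avoids relying on machinery not otherwise developed in the paper.
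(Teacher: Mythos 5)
Your argument is correct, and it takes a genuinely different route from the paper. The paper proves the proposition by induction on $n$: it conditions on $X_{n+1}$ via Tonelli's theorem, rewrites the lower-orthant probability of the $(n+1)$-dimensional system as an integral of an orthant probability for the $n$-dimensional one, applies the induction hypothesis, and then factorizes the remaining one-dimensional integrand using a bespoke ``inverse Cauchy--Schwartz'' inequality (Lemma~\ref{lemma:reverse-Cauchy-Schwartz}, which states $\int\prod_i f_i\,d\LLL\geq\prod_i\int f_i\,d\LLL$ for nonincreasing nonnegative $f_i$ --- in effect, the association of a single random variable). Your route instead invokes the Esary--Proschan--Walkup machinery wholesale: independence implies association, association is preserved under coordinatewise nondecreasing maps (which both $\lor_M$ and $\land_M$ are), and association implies both orthant inequalities. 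The two proofs rest on the same elementary kernel --- the Chebyshev-type product inequality for comonotone functions of one variable --- but yours packages the coordinate-by-coordinate induction into a citation (or a short self-contained lemma), yielding a shorter and more transparent argument that, as you note, is manifestly insensitive to the failure of Assumption~\ref{assumption}; it also delivers the upper-orthant inequality explicitly, whereas the paper's computation is written out only for the lower orthant (the paper's working definition of POD, cf.\ Proposition~\ref{proposition:POD-for-maxima}). What the paper's version buys is self-containment within its own appendix and no reliance on the association literature; what yours buys is brevity, symmetry between the two orthant bounds, and a cleaner explanation of \emph{why} mixing maxima and minima causes no sign problems (both are nondecreasing in $X$). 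The only point to make explicit if you write it up self-contained is that the pairwise covariance inequality upgrades to the $k$-fold product inequality because a product of nonnegative nondecreasing (respectively nonincreasing) functions is again nondecreasing (respectively nonincreasing), so the grouping step in your induction is licensed by association itself.
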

\begin{proof}
It is assumed without loss of generality $\CC=\DD=2^{[n]}\backslash \{\emptyset\}$. We prove by induction on $n\in \mathbb{N}$. Assume first $n=1$. Then for $\{x^\lor,x^\land\}\subset \mathbb{R}$, of course, $\PP(X_1\leq x^\land,X_1\leq x^\lor)=\PP(X_1\leq x^\land\land x^\lor)\geq \PP(X_1\leq x^\land)\PP(X_1\leq x^\lor)$. Assume now $C^{\lor\land}$ is POD for $n\in \mathbb{N}$. Let $x^{\lor\land}=((x^{\lor}_M)_{M\in 2^{[n+1]}\backslash \{\emptyset\}},(x^\land_M)_{M\in  2^{[n+1]}\backslash \{\emptyset\}})\in \mathbb{R}^{2^{[n+1]}\backslash \{\emptyset\}}\times \mathbb{R}^{2^{[n+1]}\backslash \{\emptyset\}}$. Thanks to Tonelli's theorem (using independence), we find $$\PP(X^{\lor\land}\leq x^{\lor\land})=\int \PP_{X_{n+1}}(dx)$$ $$\PP\left(\cap_{M\in 2^{[n]}\backslash \{\emptyset\}}\{\lor_M X\leq x^\lor_M\land x^\lor_{M\cup \{n+1\}} \}\cap\left\{\land_M X\leq
\begin{cases}
x_M^\land, &\text{if }x\leq x_{M\cup \{n+1\}}^\land\\
x_M^\land\land x_{M\cup\{n+1\}}^\land, & \text{otherwise}
\end{cases}
\right\} \right)$$
$$\mathbbm{1}_{\left(-\infty,x_{\{n+1\}}^\land\land \left(\bigwedge_{M\in 2^{[n]}} x_{M\cup \{n+1 \}}^\lor\right)\right]}(x),
$$
which according to the induction hypothesis is certainly $\geq$ to $$\int \PP_{X_{n+1}}(dx)\prod_{M\in 2^{[n]}\backslash \{\emptyset\}} \PP\left(\land_MX\leq \begin{cases}
x_M^\land, &\text{if }x\leq x_{M\cup \{n+1\}}^\land\\
x_M^\land\land x_{M\cup\{n+1\}}^\land, & \text{otherwise}
\end{cases}\right)\PP(\lor_MX\leq x^\lor_M\land x^\lor_{M\cup \{n+1\}})$$ $$\mathbbm{1}_{\left(-\infty,x_{\{n+1\}}^\land\land \left(\bigwedge_{M\in 2^{[n]}} x_{M\cup \{n+1 \}}^\lor\right)\right]}(x)=$$

$$=\int \PP_{X_{n+1}}(dx)\prod_{M\in 2^{[n]}\backslash \{\emptyset\}} \PP(\land_MX\leq x^\land _M,x\land (\land_{M}X)\leq x^\land_{M\cup \{n+1\}})\PP(\lor_MX\leq x^\lor_M\land x^\lor_{M\cup \{n+1\}})$$ $$\mathbbm{1}_{\left(-\infty,x_{\{n+1\}}^\land\land \left(\bigwedge_{M\in 2^{[n]}} x_{M\cup \{n+1 \}}^\lor\right)\right]}(x) \geq $$

(noting that for a random variable $A$ and $\{a,x,c\}\subset \mathbb{R}$, $\PP(A\leq a,x\land A\leq c)\geq \PP(A\leq a)\PP(A\land x\leq c)$, whilst $\PP(A\leq a\land c)\mathbbm{1}_{(-\infty,c]}(x)\geq \PP(A\leq a)\PP(A\lor x\leq c)$) $$\geq \int \PP_{X_{n+1}}(dx)\PP(x\leq x_{n+1}^\lor)\PP(x\leq x^\land_{n+1})$$ $$\prod_{M\in 2^{[n]}\backslash \{\emptyset\}} \PP(\land_MX\leq x^\land _M)\PP(x\land (\land_{M}X)\leq x^\land_{M\cup \{n+1\}})\PP(\lor_MX\leq x^\lor_M)\PP(x\lor (\lor_MX)\leq x^\lor_{M\cup \{n+1\}})$$ and we conclude via an application of Lemma~\ref{lemma:reverse-Cauchy-Schwartz} (nonincreasingness, nonnegativity and boundedness of the relevant functions is clear, whilst their left-continuity follows e.g. by bounded convergence),  and then again Tonelli's theorem (exploiting independence).
\end{proof}

\subsection{Supports and zero sets}\label{subsection:zero}
Due to a kind of degeneracy of the maxmin system $X^{\lor\land}$ (its components, of which there can be as many as $2\cdot (2^n-1)$, attain at most $n$ distinct values), a characterization of the support of the maxmin copula $C^{\lor\land}$ does not appear the most relevant (nor, indeed, feasible in general, if one insists further on it being tractable).

We remark only as follows:
\begin{proposition}
If $X$ is an independency, then the zero set of $C^{\lor\land}$ equals $[0,1]^{\CC\times \DD}\backslash (0,1]^{\CC\times\DD}$.
\end{proposition}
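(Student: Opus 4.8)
The plan is to prove the two inclusions separately. Since $C^{\lor\land}$ has already been shown in Section~\ref{section:main} to be a (hence grounded) copula, it vanishes whenever one of its arguments equals $0$; this gives at once that $[0,1]^{\CC\times\DD}\backslash(0,1]^{\CC\times\DD}$ (the set of points at least one of whose coordinates is $0$) is contained in the zero set. The whole content therefore lies in the reverse inclusion: $C^{\lor\land}(v)>0$ as soon as every coordinate of $v$ lies in $(0,1]$.

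For this I would argue probabilistically. Fix $v=((v^\lor_M)_{M\in\CC},(v^\land_M)_{M\in\DD})$ with all coordinates in $(0,1]$. Using the continuity of the marginals $F_{\lor_MX}$ and $F_{\land_MX}$ together with \eqref{copula-relation}, choose $x^\lor_M,x^\land_M\in[-\infty,+\infty]$ with $F_{\lor_MX}(x^\lor_M)=v^\lor_M$ and $F_{\land_MX}(x^\land_M)=v^\land_M$, so that $C^{\lor\land}(v)=\PP(E)$, where $E:=\bigcap_{M\in\CC}\{\lor_MX\le x^\lor_M\}\cap\bigcap_{M\in\DD}\{\land_MX\le x^\land_M\}$. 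Rewriting the constraints indexwise, the maxima give $\bigcap_{i=1}^n\{X_i\le a_i\}$ with $a_i:=\bigwedge_{M\in\CC,\,M\ni i}x^\lor_M$ (an empty meet read as $+\infty$), while the minima give $\bigcap_{M\in\DD}\bigcup_{i\in M}\{X_i\le x^\land_M\}$.

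The key step is to exhibit a rectangle of positive probability inside $E$. By independence, $v^\lor_M=\prod_{i\in M}F_i(x^\lor_M)>0$ forces $F_i(x^\lor_M)>0$ for every $i\in M$, and $v^\land_M=1-\prod_{i\in M}(1-F_i(x^\land_M))>0$ forces $F_{i(M)}(x^\land_M)>0$ for at least one index $i(M)\in M$. Picking one such witness $i(M)$ for each $M\in\DD$ and setting $E':=\bigcap_{i=1}^n\{X_i\le a_i\}\cap\bigcap_{M\in\DD}\{X_{i(M)}\le x^\land_M\}$, we have $E'\subseteq E$ because $X_{i(M)}\le x^\land_M$ already entails $\land_MX\le x^\land_M$. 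Grouping by index, $E'=\bigcap_{j=1}^n\{X_j\le b_j\}$ with $b_j:=a_j\wedge\bigwedge\{x^\land_M:M\in\DD,\,i(M)=j\}$, whence $\PP(E')=\prod_{j=1}^nF_j(b_j)$ by independence. Each factor is positive, since $F_j(a_j)=\bigwedge_{M\in\CC,\,M\ni j}F_j(x^\lor_M)>0$ and $F_j(x^\land_M)>0$ for every $M$ with $i(M)=j$. Hence $C^{\lor\land}(v)=\PP(E)\ge\PP(E')>0$, as required.

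I expect the main obstacle to be conceptual rather than computational: it is the choice of a single-index witness $i(M)$ per minimum-set, which replaces each union-type (minimum) constraint by one coordinate inequality and thereby turns the intersection into a genuine product that independence can evaluate. The only delicate points are bookkeeping ones --- handling the boundary values $v^\bullet_M=1$, where the corresponding $x$ may be taken $+\infty$ and the constraint becomes vacuous, and justifying $C^{\lor\land}(v)=\PP(E)$ there by the continuity of $C^{\lor\land}$ and of the marginals --- all of which are routine.
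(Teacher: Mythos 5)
Your proof is correct and follows essentially the same route as the paper's: under independence one exhibits, for each point of $(0,1]^{\CC}\times(0,1]^{\DD}$, preimages under the continuous marginals $F_{\lor_MX}$, $F_{\land_MX}$ and shows the corresponding event has positive probability. In fact your write-up is more complete than the paper's one-line sketch, since you make explicit the key reduction that the sketch leaves implicit --- choosing a single witness index $i(M)$ per $M\in\DD$ so that the event contains a product-form rectangle of positive probability --- while the paper instead works with points strictly above $\inf\supp$ and invokes monotonicity of $C^{\lor\land}$; both variants are sound.
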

\begin{proof}
For $M\in \CC$ (respectively $M\in \DD$), let $x^\lor_M>\inf\supp(\PP_{\lor_MX})=\lor_{i\in M}\alpha_i$ (respectively $x^\land_M>\inf\supp(\PP_{\land_MX})=\land_{i\in M} \alpha_i$), where for $i\in [n]$, $\alpha_i:=\inf\supp(\PP_{X_i})$; then $\PP(X^{\lor\land}\leq ((x^\lor_M)_{M\in \CC},(x^\land_M)_{M\in \DD}))>0$. Conclude via the nondecreasingness of $C^{\lor\land}$ in each coordinate, and the continuity of the relevant distribution functions.
\end{proof}

\subsection{Connection to order statistics} \label{subsection:order}
Note that for $i\in [n]$, the $i$-th order statistic of a random vector $(Y_1,\ldots,Y_n)$ can be expressed as $Y_{(i)}=\land\{\lor_M Y:M\in 2^{[n]}, \vert M\vert=i\}$. Moreover, when the $Y_i$, $i\in [n]$, are i.i.d. with continuous distribution functions $F$, then the random vector $X=(\lor_M Y:M\in 2^{[n]}\backslash \{\emptyset\})$ consists of continuous random variables, it has the copula $$C(u)=\prod_{i=1}^n\left(\underset{\substack{M\in 2^{[n]}\backslash \{\emptyset\},\\M\ni i}}{\bigwedge}\sqrt[\vert M\vert]{u_M}\right),\quad u=(u_M)_{M\in 2^{[n]}\backslash \{\emptyset\}}\in [0,1]^{2^{[n]}\backslash \{\emptyset\}}$$ and with $\DD=\{\{M\in 2^{[n]}\backslash \{\emptyset\}:\vert M\vert =k\}:k\in [n]\}$, $\CC=\emptyset$, satisfies the provisions of Assumption~\ref{assumption}. We also have the distribution function of the $i$-th order statistic given as $F_{(i)}=\sum_{k=i}^n{n\choose k}F^k(1-F)^{n-k}=O_i\circ F$, where \begin{equation}\label{os:aux}
O_i=\left(u\mapsto \sum_{k=i}^n{n\choose k}u^k(1-u)^{n-k},u\in [0,1]\right)
\end{equation}
is an increasing bijection\footnote{It follows e.g. from the fact that in the case of independent, identically --- uniformly on $[0,1]$ --- distributed random variables, the distribution functions of the order statistics are strictly increasing on $[0,1]$.} of $[0,1]$; whilst for a $k$-element subset $M$ of $[n]$, of course, $F_{\lor_M Y}=F^k$. Hence the relevant distortion functions are given for $k\in [n]$ and $k$-element subsets $M$ of $[n]$, as $\zeta_k:=\cdot^{k}\circ O_k^{-1}$. It follows that \eqref{main} gives an explicit expression for the copula of the order statistics $(Y_{(1)},\ldots,Y_{(n)})$, specifically, denoting it by $C^{\text{OS}}$, it is given by (for $v\in \mathbb{R}^n$): $$C^{\text{OS}}(v)=\sum_{k=1}^n\sum_{\substack{I\subset [n]\\ \vert I\vert=k}}\sum_{\substack{z\in \{0,1\}^{2^{[n]}\backslash \{\emptyset\}}\\ z\not\equiv 0}}(-1)^{k+\sum z}\prod_{i=1}^n \bigwedge_{\substack{M\in 2^{[n]}\backslash \{\emptyset\},\\ M\ni i}}\sqrt[\vert M\vert]{\zeta_{\vert M\vert}(v_{\vert M\vert})\mathbbm{1}_{I}(\vert M\vert)}^{z_M}.$$ This can be substantially simplified. Indeed, since the term in the sum of the preceding display is non-zero only if for all $M\in 2^{[n]}\backslash \{\emptyset\}$, $\vert M\vert\notin I$ implies $z_M=0$, we obtain, denoting

\begin{equation}\label{OS:[[]]}
[[z]]:=\vert \{\vert M\vert:M\in z^{-1}(\{1\})\}\vert\text{ for a function }z,
\end{equation}
that
$$C^{\text{OS}}(v)=\sum_{k=1}^n\sum_{\substack{I\subset [n]\\ \vert I\vert=k}}\sum_{\substack{z\in \{0,1\}^{2^{[n]}\backslash \{\emptyset\}}\\ z\not\equiv 0\\ \vert M\vert\notin I\Rightarrow z_M=0}}(-1)^{k+\sum z}\prod_{i=1}^n \bigwedge_{\substack{M\in 2^{[n]}\backslash \{\emptyset\},\\ M\ni i\\ z_M=1}}O_{\vert M\vert}^{-1}(v_{\vert M\vert})$$
$$=\sum_{\substack{z\in \{0,1\}^{2^{[n]}\backslash \{\emptyset\}}\\ z\not\equiv 0}}(-1)^{\sum z}\sum_{k=1}^n(-1)^k\sum_{\substack{I\subset [n]\\\vert I\vert=k\\z_M=1\Rightarrow \vert M\vert\in I}}\prod_{i=1}^n \bigwedge_{\substack{M\in 2^{[n]}\backslash \{\emptyset\},\\ M\ni i\\ z_M=1}}O_{\vert M\vert}^{-1}(v_{\vert M\vert})$$
$$=\sum_{\substack{z\in \{0,1\}^{2^{[n]}\backslash \{\emptyset\}}\\ z\not\equiv 0}}(-1)^{\sum z}\left(\prod_{i=1}^n \bigwedge_{\substack{M\in 2^{[n]}\backslash \{\emptyset\},\\ M\ni i\\ z_M=1}}O_{\vert M\vert}^{-1}(v_{\vert M\vert})\right)\sum_{k=1}^n(-1)^k\sum_{\substack{I\subset [n]\\\vert I\vert=k\\z_M=1\Rightarrow \vert M\vert\in I}}1$$
$$=\sum_{\substack{z\in \{0,1\}^{2^{[n]}\backslash \{\emptyset\}}\\ z\not\equiv 0}}(-1)^{\sum z}\left(\prod_{i=1}^n \bigwedge_{\substack{M\in 2^{[n]}\backslash \{\emptyset\},\\ M\ni i\\ z_M=1}}O_{\vert M\vert}^{-1}(v_{\vert M\vert})\right)\sum_{k=[[ z]]}^n(-1)^k{n-[[z]]\choose k-[[z]]}$$
$$=\sum_{\substack{z\in \{0,1\}^{2^{[n]}\backslash \{\emptyset\}}\\ z\not\equiv 0}}(-1)^{\sum z}\left(\prod_{i=1}^n \bigwedge_{\substack{M\in 2^{[n]}\backslash \{\emptyset\},\\ M\ni i\\ z_M=1}}O_{\vert M\vert}^{-1}(v_{\vert M\vert})\right)(-1)^{[[z]]}\delta_{0(n-[[z]])};$$
finally $C^{\text{OS}}(v)$ is seen to be equal to

\begin{equation}\label{equation:order_statistic_copula}
\mathbf{C^{\text{OS}}}(v):=(-1)^n\sum_{\substack{z\in \{0,1\}^{2^{[n]}\backslash \{\emptyset\}},\\ [[z]]=n}}\!\!\!\!\!\!\!(-1)^{\sum z}\prod_{i=1}^n \bigwedge_{\substack{M\in 2^{[n]}\backslash \{\emptyset\},\\ M\ni i\\ z_M=1}}O_{\vert M\vert}^{-1}(v_{\vert M\vert}).
\end{equation}
The (so introduced) \textbf{order statistics copula} $\mathbf{C^{\text{OS}}}$ is of course universal (independent of $F$), as previously observed \cite{averous,navarro}. Consequently, just as with the maxmin copula in the i.i.d. scenario, and with an entirely analogous proof, we may drop the requirement that $F$ be continuous, and still $\mathbf{C^{\text{OS}}}$ remains a copula of the vector of the order statistics.

\begin{proposition}\label{proposition:order_statistic}
Let $Y$ be an i.i.d. random vector with values in $\mathbb{R}^n$. Then $\mathbf{C^{\text{OS}}}$, as given by \eqref{equation:order_statistic_copula}-\eqref{OS:[[]]}-\eqref{os:aux}, is a copula for its order statistics vector $(Y_{(1)},\ldots,Y_{(n)})$.
\end{proposition}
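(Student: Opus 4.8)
The plan is to mirror exactly the argument used earlier to establish, in the i.i.d.\ case, that $\mathbf{C^{\lor\land}}$ remains a copula once Assumption~\ref{assumption} is dropped. Since $\mathbf{C^{\text{OS}}}$ is universal (independent of $F$) and has already been shown, via Theorem~\ref{theorem} applied as in Subsection~\ref{subsection:order}, to be a copula for the order statistics whenever the common marginal $F$ is continuous, it suffices to approximate a general i.i.d.\ $Y$ by i.i.d.\ vectors possessing continuous marginals and to pass to the limit. Concretely, I would, possibly after extending the probability space, introduce a sequence $(W^m)_{m\in\mathbb{N}}$ of random vectors, independent of $Y$, each i.i.d.\ with continuous (say, absolutely continuous with strictly positive density) marginals, whose laws converge weakly to $\delta_0$ as $m\to\infty$; and set $Y^m:=Y+W^m$.

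For each $m$ the vector $Y^m$ is again i.i.d., and its marginals are continuous (a convolution with a continuous law being continuous). Hence the continuous-marginal case applies to $Y^m$ and yields, for every $m$,
$$F_{(Y^m_{(1)},\ldots,Y^m_{(n)})}=\mathbf{C^{\text{OS}}}\circ (F_{Y^m_{(i)}}\circ\pr_i)_{i=1}^n.$$
Moreover $Y^m\to Y$ weakly as $m\to\infty$, and since the sorting map $\mathbb{R}^n\to\mathbb{R}^n$, $y\mapsto(y_{(1)},\ldots,y_{(n)})$, is continuous, the continuous mapping theorem gives $(Y^m_{(1)},\ldots,Y^m_{(n)})\to (Y_{(1)},\ldots,Y_{(n)})$ weakly; in particular each marginal $F_{Y^m_{(i)}}$ converges to $F_{Y_{(i)}}$ at the continuity points of the latter.

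To finish, I would let $m\to\infty$. By weak convergence the joint distribution functions converge to $F_{(Y_{(1)},\ldots,Y_{(n)})}$ at the continuity points of the limit, the marginals converge at their own continuity points, and $\mathbf{C^{\text{OS}}}$ is continuous; combining these, the displayed copula relation holds for $Y$ at every $v\in\mathbb{R}^n$ that is, component by component, a continuity point of each $F_{Y_{(i)}}$ (such a point being automatically a continuity point of the joint distribution function \cite[p. 164]{durrett}). The one point requiring care --- exactly as in the i.i.d.\ maxmin case --- is the transfer from this co-countable set to all of $\mathbb{R}^n$: since each $F_{Y_{(i)}}$ has at most countably many discontinuities, for an arbitrary $v$ one can choose reals $a_m\downarrow 0$ with each $v+a_m\mathbbm{1}$ a component-wise continuity point of every $F_{Y_{(i)}}$, whereupon the right-continuity of distribution functions together with the continuity of $\mathbf{C^{\text{OS}}}$ permits passing to the limit to recover the relation at $v$, hence everywhere. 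As $\mathbf{C^{\text{OS}}}$ is already known to be a genuine copula, this shows it is a copula for $(Y_{(1)},\ldots,Y_{(n)})$, as asserted. All remaining ingredients --- copula-hood and universality of $\mathbf{C^{\text{OS}}}$, continuity of sorting, preservation of continuity under convolution --- are routine, so I expect this co-countable-to-everywhere passage to be the only real obstacle.
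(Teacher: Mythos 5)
Your proposal is correct and follows essentially the same route as the paper: the paper's own proof is declared to be ``entirely analogous'' to the perturbation argument given for $\mathbf{C^{\lor\land}}$ in the i.i.d.\ case, i.e.\ smoothing $Y$ by an independent continuous noise $W^m$ with laws tending weakly to $\delta_0$, invoking universality and the continuous mapping theorem, and then passing from component-wise continuity points to all of $\mathbb{R}^n$ via right-continuity of distribution functions and continuity of $\mathbf{C^{\text{OS}}}$. Your identification of the co-countable-to-everywhere passage as the only delicate step, and your handling of it, match the paper exactly.
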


\subsection{Successive formations of maxmin systems and connection to survival analysis}\label{subsection:survival} Let $X$, $\CC$, $\DD$, $X^{\lor\land}$ and $C^{\lor\land}$ be as in Sections~\ref{section:preliminaries} and~\ref{section:main}. We call the mapping $C\mapsto C^{\lor\land}$ a \textbf{maxmin transformation}, where $X$, $\CC$, $\DD$, and $X^{\lor\land}$ are understood and omitted by an abuse of notation. Now if the resulting vector $X^{\lor\land}$ again satisfies Assumption~\ref{assumption} (with the obvious change of $n$ and newly defined sets $\CC$ and $\DD$), then we can repeat the procedure and call the composition of the two transformations a \textbf{second order maxmin transformation}. It is possible that this procedure can be continued even further and brings us eventually to the composition of $m$ consecutive maxmin transformations to be called an \textbf{$m$-fold maxmin transformation} of the starting system and its final copula denoted by $C^{\lor\land}_{(m)}$ and called an \textbf{$m$-fold maxmin copula}.

Now, in principle, Theorem~\ref{theorem} assures us a maxmin copula can be obtained in closed form under Assumption~\ref{assumption}. In the context of applying the maxmin transformation consecutively, and sacrificing some generality to the benefit of simplicity, the distribution of $X$ will thus be said to \textbf{verify Assumption~\ref{assumption} up to order $m$}, if $X$ and its derivatives $X^{\lor\land}_{(i)}$ for all $i \in [m-1]$, and for any choice of $\CC$ and $\DD$ in each step, satisfy the provisions of Assumption~\ref{assumption}. Then Remark~\ref{remark:stable}\ref{remark:stable:b} gives us a reasonably general class of distributions that verifies Assumption~\ref{assumption} up to indeed any order, and for a system of the kind we obviously have  by a successive application of Theorem~\ref{theorem}:

\begin{quote}
\emph{If the distribution of $X$ verifies Assumption~\ref{assumption} up to order $m$, then the copula $C^{\lor\land}_{(m)}$ is obtainable, at least in principle, in closed form.}
\end{quote}
Some remarks. (1) We say ``at least in principle'', since with increasing $m$ the computational complexity of the evaluation of $C^{\lor\land}_{(m)}$ becomes a legitimate concern. (2) That being said, simplifications can occur in the process, e.g. the order statistics copula $\mathbf{C^{\mathrm{OS}}}$ was actually obtained precisely through a second order maxmin transformation.  (3) A question open to future research is this: if $X$ verifies Assumption~\ref{assumption}, does it automatically verify it up to any order?


Consider now the shock model depicted in detail in the Introduction. We have a (component) random vector $Y= (Y_i)_{i=1}^n$, and a (shock) random vector $Z=(Z_j)_{j=1}^m$, with joint copula $C$ for the vector $X:=(Y,Z)$. We introduce $\mathrm{opt}_{i,j}$ by
\[
    \mathrm{opt}_{i,j}:=\left\{
                                     \begin{array}{ll}
                                       \min, & \hbox{if the shock $j$ is detrimental to component $i$;} \\
                                       \max, & \hbox{if the shock $j$ is beneficial to component $i$.}
                                     \end{array}
                                   \right.
\]
for $i\in [n]$, $j\in [m]$.  In general the order of the shocks to each of the components matters (since $\lor$ does not commute with $\land$). For this reason, for each $i\in [n]$, we fix a permutation $w_i\in [m]^{[m]}$ of $[m]$ which determines the order in which the shocks affect the $i$-th component. Then the resulting lifetime of the $i$-th component, after $k$ shocks have affected it, is given by
\[
    Y_i^k:= \mathrm{opt}_{i,w_i(k)}\{\mathrm{opt}_{i,w_i(k-1)}\{\cdots\mathrm{opt}_{i,w_i(1)}\{Y_i,Z_{w_i(1)}\}\cdots,
    Z_{w_i(k-1)}\},Z_{w_i(k)}\}.
\]
Furthermore, define for each $k\in [m]$ the system
\[
    X^{\lor\land}_{(k)}:=((Y_i ^k)_{i\in[n]},Z)
\]
except for $k=m$, when $X^{\lor\land}_{(m)}:=(Y_i^m)_{i\in [n]}$. A little thought reveals that each of the copulas $C^{\lor\land}_{(k)}$ of $X^{\lor\land}_{(k)}$ is obtained via a maxmin transformation from the one with index $k-1$ (if we denote additionally $X^{\lor\land}_{(0)}:=(Y,Z)$ and $C^{\lor\land}_{(0)}:=C$), so that $C^{\lor\land}_{(m)}$ is an $m$-fold maxmin copula by an inductive argument on $k$, provided the provisions of Assumption~\ref{assumption} are met at each step. The following simple observation follows from the above:

\begin{quote}
\emph{If the distribution of the random vector $(Y,Z)$ verifies Assumption~\ref{assumption} up to order $m$, then the copula of the resulting lifetimes $(Y_i^m)_{i\in [n]}$ is an $m$-fold maxmin copula.}
\end{quote}
Obvious generalizations of the above are possible, e.g. not all the shocks need affect every component, the same shock may affect the same component several times etc. The same reasoning applies, only the notation changes. 

\subsection{Low-dimensional examples}\label{subsection:low-dimensional}
Let $X_1$, $X_2$, $X_3$ be i.i.d.

\begin{example}
Let $T_A:=X_1\land X_2$, $T_B:=X_1\lor X_2$. Then the copula $C:[0,1]^2\to [0,1]$ for the random vector $(T_A,T_B)$ is given by (using \eqref{main:iid}): $$C(a,b)=b-\left(\left(\sqrt{b}+\sqrt{1-a}-1\right)^+\right)^2.$$ As a check we may obtain this copula also using \eqref{equation:order_statistic_copula}; it yields the expression $2\sqrt{b}(\sqrt{b}\land (1-\sqrt{1-a}))-(\sqrt{b}\land (1-\sqrt{1-a}))^2$, which is of course the same.
\end{example}

\begin{example}
Let $T_A:=X_1\land X_2$, $T_B:=X_1\lor X_2\lor X_3$. Then the copula $C:[0,1]^2\to [0,1]$ for the random vector $(T_A,T_B)$ is given by (using \eqref{main:iid}): $$C(a,b)=b-\sqrt[3]{b}\left(\left(\sqrt[3]{b}+\sqrt{1-a}-1\right)^+\right)^2.$$
\end{example}

\begin{example}
Let $T_A:=X_1\land X_2\land X_3$, $T_B:=X_1\lor X_2$. Then the copula $C:[0,1]^2\to [0,1]$ for the random vector $(T_A,T_B)$ is given by (using \eqref{main:iid}): $$C(a,b)=b-\sqrt[3]{1-a}\left(\left(\sqrt{b}+\sqrt[3]{1-a}-1\right)^+\right)^2.$$
\end{example}

\bibliographystyle{plain}
\bibliography{Biblio_copulae}

\appendix

\section{Auxiliary results}\label{section:auxiliary}

\begin{lemma}[Transformations of distribution functions]\label{lemma:key}
Let $F$ and $G$ be two distribution functions (right-continuous nondecreasing $[0,1]$-valued functions on $\mathbb{R}$ with respective limits $0$ and $1$ at $-\infty$ and $+\infty$). Then there exists a continuous nondecreasing (respectively strictly increasing) $\Phi:[0,1]\to [0,1]$ such that $F=\Phi\circ G$, if and only if:
$$\text{every interval of constancy of }G\text{ is an interval of constancy of }F$$
and
$$\text{ every point of discontinuity of  }F\text{ is}\text{ a point of discontinuity of }G$$  (respectively $G$ and $F$ share their intervals of constancy and points of discontinuity).

Moreover, such $\Phi$ is unique whenever (respectively if and only if) $\overline{\IM G}=[0,1]$, i.e. $G$ is continuous.
\end{lemma}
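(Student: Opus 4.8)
The plan is to prove both directions of each equivalence and then the uniqueness clause, with the whole argument resting on the explicit partial formula $\Phi=F\circ G_l^{-1}$ together with the elementary fact that a nondecreasing surjection of $[0,1]$ onto itself is automatically continuous. First I would dispatch necessity. If $F=\Phi\circ G$ with $\Phi$ continuous nondecreasing, then $G$ constant on an interval $I$ forces $\Phi\circ G=F$ constant on $I$; and if $F$ jumps at $x_0$ then $G$ cannot be continuous there, for otherwise $\Phi\circ G$ would be continuous at $x_0$ by continuity of $\Phi$. This yields the two stated conditions. For the strictly increasing variant one adds the reverse inclusions: injectivity of $\Phi$ turns an interval of constancy of $F$ into one of $G$, while strict monotonicity together with continuity of $\Phi$ turns a jump $G(x_0^-)<G(x_0)$ into $F(x_0^-)=\Phi(G(x_0^-))<\Phi(G(x_0))=F(x_0)$.

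For sufficiency I would construct $\Phi$ explicitly. Set $\psi(g):=F(x)$ for $g\in\IM G$ and any $x$ with $G(x)=g$ (equivalently $\psi=F\circ G_l^{-1}$ on $\IM G\cap(0,1)$). The first thing to verify is that $\psi$ is well defined: if $G(x_1)=G(x_2)$ with $x_1<x_2$, then $G$ is constant on $[x_1,x_2]$, so the constancy condition makes $F$ constant on $(x_1,x_2)$; right-continuity gives $F(x_1)=F(x_1^+)$, while $G$ is continuous at $x_2$ and hence, by the discontinuity condition, so is $F$, giving $F(x_2)=F(x_2^-)$, both equal to the common interior value. The same bookkeeping applied to $x_*:=G_l^{-1}(G(x))\le x$ yields the composition identity $F=\psi\circ G$ on all of $\RR$, the boundary cases $G(x)\in\{0,1\}$ being handled using constancy of $G$ on the unbounded tails and the limits of $F$ at $\pm\infty$. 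Monotonicity of $\psi$ is immediate, since $G(x_1)<G(x_2)$ forces $x_1<x_2$ and hence $F(x_1)\le F(x_2)$.

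The main obstacle is the continuity of the extension of $\psi$ from $\overline{\IM G}$ to all of $[0,1]$. Each gap of $\IM G$ is an interval $(a,b)$ opened by a jump of $G$ at some $x_0$, with $a=G(x_0^-)$ and $b=G(x_0)$; I would extend $\psi$ across it by an affine interpolation between $\psi(a)=F(x_0^-)$ and $\psi(b)=F(x_0)$ (taking these as the appropriate one-sided limits when $a\notin\IM G$), and set $\Phi(0):=0$, $\Phi(1):=1$. Since $G$ takes no value in $(a,b)$, this does not disturb $F=\Phi\circ G$. Rather than check limits directly, I would then establish continuity via the observation that a nondecreasing $\Phi\colon[0,1]\to[0,1]$ with $\Phi(0)=0$ and $\Phi(1)=1$ is continuous precisely when it is surjective. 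Surjectivity is exactly where both hypotheses are spent: any $v\in\IM F$ is attained as $\Phi(G(x))$; and any $v\notin\IM F$ lies in an open jump-gap $(F(x_0^-),F(x_0))$ of $F$, whereupon the discontinuity condition makes $x_0$ a jump of $G$ too, so the interpolation on the corresponding gap of $\IM G$ runs continuously from $F(x_0^-)$ to $F(x_0)$ and hits $v$. This is the delicate point, for it is precisely the failure of either condition that would open a gap in $\IM\Phi$ and thereby create a jump. For the strictly increasing case one checks in addition that $\psi$ has no flat stretches (a flat of $\psi$ would, via shared constancy, force $G$ constant there) and that each interpolation may be taken strictly increasing (the shared-discontinuity condition guarantees $F(x_0^-)<F(x_0)$ at every gap).

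Finally, uniqueness. If $G$ is continuous then $\overline{\IM G}=[0,1]$ and $\IM G\supseteq(0,1)$, so any admissible $\Phi$ is pinned down on $(0,1)$ by $\Phi(G(x))=F(x)$ and on $\{0,1\}$ by continuity; hence $\Phi=F\circ G_l^{-1}$ is the only possibility, in both settings. For the strictly increasing variant this is also necessary: if $G$ is discontinuous, a jump opens a gap $(a,b)$ of $\IM G$ on which the shared-discontinuity condition forces $F(x_0^-)<F(x_0)$, so two distinct strictly increasing interpolations across $[a,b]$ yield two distinct admissible $\Phi$, and uniqueness fails. In the merely nondecreasing case no such forcing occurs—a jump of $G$ located at a continuity point of $F$ leaves the interpolation forced (constant) and so does not by itself create a second solution—which is why continuity of $G$ is sufficient but not necessary there, and uniqueness is asserted only ``whenever'' rather than ``if and only if''.
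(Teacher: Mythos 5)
Your proposal is correct in substance and uses the same construction as the paper ($\Phi=F\circ G_l^{-1}$ on $\IM G\cap(0,1)$, affine interpolation across the gaps of $\IM G$, the values $0$ and $1$ at the endpoints; identical treatment of necessity, of the strictly increasing variant, and of uniqueness, including the observation that a jump of $G$ at a continuity point of $F$ forces a constant interpolation and hence does not destroy uniqueness in the merely nondecreasing case). Where you genuinely depart from the paper is in the verification that $\Phi$ is continuous: the paper does this by a long direct case analysis of one-sided limits at every type of point of $[0,1]$ (isolated points of $\IM G$, endpoints of gaps, accumulation points, the points $0$ and $1$), whereas you reduce it to the elementary fact that a nondecreasing self-map of $[0,1]$ with $\Phi(0)=0$, $\Phi(1)=1$ is continuous if and only if it is surjective, and then argue surjectivity: values in $\IM F$ are hit via $F=\Phi\circ G$, and a value in a gap of $\IM F$ sits over a jump of $F$, hence (by the discontinuity hypothesis) over a jump of $G$, hence is swept out by the affine interpolation over the corresponding gap of $\IM G$. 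This is a cleaner and more conceptual way to finish, and it makes visible exactly where the discontinuity hypothesis is consumed; the paper's route, by contrast, localizes the failure of continuity and is perhaps more robust if one wants variants with non-affine interpolations.

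The one place your plan is thin is the step you pass over with ``Monotonicity of $\psi$ is immediate'': what the surjectivity argument actually needs is monotonicity of the \emph{full} $\Phi$, including how the interpolated pieces mesh with the values of $\psi$ at and near the gap endpoints. Concretely, for a gap $(a,b)$ opened by a jump of $G$ at $x_0$ you assert $\psi(a)=F(x_0^-)$; when $a=G(x_0^-)\in\IM G$ this is not a definition but a claim, proved by noting that $G$ is then constant at level $a$ on some $[y_0,x_0)$, so by the constancy hypothesis $F$ is constant there too, whence $F(G_l^{-1}(a))=F(x_0^-)$; when $a\notin\IM G$ one must instead check that the left limit $(F\circ G_l^{-1})(a-)$ equals $F(x_0^-)$ and dominates all $\Phi$-values to the left. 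This is precisely the content of the paper's Remark~(b) and of its bracketed monotonicity argument, and it is where the constancy hypothesis is spent a second time; your surjectivity claim that the interpolation ``runs from $F(x_0^-)$ to $F(x_0)$'' also rests on it. These details are all true and fill in routinely, so the plan goes through, but they should be written out rather than absorbed into the word ``immediate''.
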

\begin{remark}\label{remark}
\leavevmode
\begin{enumerate}[(i)]
\item\label{remark:constructive} The proof is constructive in the sufficiency part. Indeed, if $G$ is continuous, then $\Phi=F\circ G_l^{-1}$, extended by zero at $0$ and unity at $1$. More generally, $\Phi$ may be chosen so that it agrees with $F\circ G_l^{-1}$ on $(0,1)\cap\IM G$.
\item Such $\Phi$ always automatically satisfies $\Phi(0)=0$ and $\Phi(1)=1$ /test $F=\Phi\circ G$ against the limits at $\pm\infty$, and use the nondecreasingness of $\Phi$/.
\item Dropping the continuity requirement, we have the following simpler characterization, which complements the result of Lemma~\ref{lemma:key}:
\begin{quote}
Let $F$ and $G$ be two distribution functions. There exists a nondecreasing $\Phi:[0,1]\to [0,1]$ such that $F=\Phi\circ G$, if and only if:
$$\text{every interval of constancy of }G\text{ is an interval of constancy of }F.$$
Such $\Phi$ is unique whenever $(0,1)\subset \IM G$.
\end{quote}
\begin{proof}
Necessity and uniqueness is clear. Sufficiency follows by taking $\Phi=F\circ G_l^{-1}$, extending it by zero at $0$ and unity at $1$ (cf. proof below).

Interestingly enough, taking $G=\id_{[0,1/2)}\mathbbm{1}_{[0,1/2)}+(1/2+1/2\id_{[1/2,1]})\mathbbm{1}_{[1/2,1]}+\mathbbm{1}_{(1,\infty)}$ and $F=\id_{[0,1]}\mathbbm{1}_{[0,1]}+\mathbbm{1}_{(1,\infty)}$ these two distribution functions share their intervals of constancy, $(0,1)\backslash \IM G$ is a non-degenerate interval, and yet the unique nondecreasing $\Phi:[0,1]\to [0,1]$ for which $F=\Phi\circ G$ is not strictly increasing (a quick analysis reveals indeed that $\Phi$ maps $x\mapsto x$ for $x\in [0,1/2)$, $x\mapsto 2x-1$ for $x\in [3/4,1]$ and hence $x\mapsto 1/2$ for $x\in [1/2,3/4)$).
\end{proof}
\end{enumerate}
\end{remark}
\begin{proof}
The remark regarding uniqueness is clear (respectively, in the necessity part, will be clear, from the construction following). Necessity of the conditions is clear. To prove sufficiency we construct $\Phi$ as follows.

Let $G_l^{-1}$ be the left-continuous inverse of $G$ (recall $G_l^{-1}:(0,1)\to\mathbb{R}$, $G_l^{-1}(g)=\inf\{x\in \mathbb{R}:G(x)\geq g\}$ for $g\in (0,1)$) that is clearly nondecreasing. Then define for $x\in [0,1]$, $\Phi(x):=0$ if $x=0$; $\Phi(1):=1$ if $x=1$; and otherwise $\Phi(x):=F(G_l^{-1}(x))$ if $x$ belong to $\IM G\cap (0,1)$; finally $\Phi(x):=(F\circ G_l^{-1})(\ux -)+\frac{x-\ux}{\ox-\ux}[(F\circ G_l^{-1})(\ox)-(F\circ G_l^{-1})(\ux-)]$ if $x\in (0,1)\backslash \IM G$ (where we understand $(F\circ G_l^{-1})(1):=1$ and $(F\circ G_l^{-1})(0-):=0$; and for $x\in (0,1)\backslash \IM G$, $\ox:=G(G^{-1}_l(x))$ and $\ux:=G(G^{-1}_l(x)-)$ /so that $[x,x+\epsilon)\cap \IM G=\emptyset$ for some $\epsilon>0$, thanks to the right-continuity of $G$, and necessarily $(\ux,\ox)$ is a maximal non-degenerate open interval of $(0,1)\backslash \IM G$, with $\ox\in \IM G$, $\ux\in \overline{\IM G}$/).\footnote{Any nondecreasing continuous interpolation between $(\ux,(F\circ G_l^{-1})(\ux-))$ and $(\ox,(F\circ G_l^{-1})(\ox))$ would of course also do.}

We verify at once that $F=\Phi\circ G$; let $x\in \mathbb{R}$. If $G(x)=1$, then $F(x)=1$ also, since $\lim_{\infty}F=1$ and every interval of constancy of $G$ is an interval of constancy of $F$ -- the equality $F(x)=\Phi(G(x))$ indeed holds. Similarly if $G(x)=0$, then $F(x)=0$, and again the desired equality obtains. Otherwise let $z:=\inf\{y\in \mathbb{R}:G(y)=G(x)\}\in \mathbb{R}$. Then, since every interval of constancy of $G$ is an interval of constancy of $F$, in order to check $F(x)=\Phi(G(x))$ it suffices to find $F(z)=\Phi(G(z))$. But clearly $G^{-1}_l(G(z))=z$. So it will be enough to argue then that $\Phi(G(z))=F(G_l^{-1}(G(z)))$, but this follows from the obvious observation that $G(z)\in (0,1)\cap \IM G$.

Before proceeding, two remarks.

(a) For $0<a<b<1$, $(a,b)$ is a maximal non-degenerate open interval of $(0,1)\backslash \IM G$, if and only if it is a  maximal non-degenerate open interval of constancy of $G_l^{-1}$.

(b) If $\IM G\ni u<x\notin \IM G$ are from $(0,1)$, then $F(G_l^{-1}(u))\leq (F\circ G_l^{-1})(\ux-)$. For, if $\underline{x}\notin \IM G$, then $u<\ux$ and so by Remark~(a) $G_l^{-1}(u)<G_l^{-1}(\ux)$. If, however, $\ux\in \IM G$, then $G$ is locally constant strictly to the left of $G^{-1}_l(x)$, hence so is $F$, and then if $G$ has a jump at $G_l^{-1}(\ux)$, $(F\circ G_l^{-1})(\ux-)=F(G_l^{-1}(\ux))$, whilst if it does not, then neither does $F$, hence again $(F\circ G_l^{-1})(\ux-)=F(G_l^{-1}(\ux))$.

Now it is clear from the construction and Remark (b) that $\Phi$ is nondecreasing. [In the respective instance we have indeed that the intervals of constancy of $G_l^{-1}$ coincide with those of $F\circ G_l^{-1}$ /for, if $0<a<b<1$ and $G_l^{-1}(a)<G_l^{-1}(b)$, then $G(G_l^{-1}(a))<G(G_l^{-1}(b))$, hence $F(G_l^{-1}(a))<F(G_l^{-1}(b))$/. Then take a maximal open non-degenerate interval of constancy $(a,b)$ for $F\circ G_l^{-1}$, equivalently $G_l^{-1}$:
\begin{itemize}
\item if $a=0$, $ G\circ G_l^{-1}(b)\geq b>0$, so $F\circ G_l^{-1}(b)>0$, hence $F\circ G_l^{-1}(a-)=0< F\circ G_l^{-1}(b)$;
\item otherwise if $a\notin \IM G$, $G$ must have a jump over $(a,b)$ at $G_l^{-1}(a)=G_l^{-1}(b)$, hence $F$ must have a jump at $G_l^{-1}(a)$ also, whilst $a$ is a left increase point of $G_l^{-1}$, and thus $F\circ G_l^{-1}(a-)<F\circ G_l^{-1}(a)\leq F\circ G_l^{-1}(b)$ /also works for $b=1$: in that case, $1\in \IM G$ and the definition of $G_l^{-1}(g)$ extends naturally to include $g=1$, with $F(G_l^{-1}(1))=1$/;
\item finally if $a\in \IM G$, $G$ jumps over $(a,b)$ at $G_l^{-1}(b)$, is locally constant at level $a$ strictly to the left of $G_l^{-1}(b)$, $G_l^{-1}(a)<G_l^{-1}(b)$, $G(G_l^{-1}(a))=a<b=G(G_l^{-1}(b))$ and hence $F\circ G_l^{-1}(a-)\leq F\circ G_l^{-1}(a)<F\circ G_l^{-1}(b)$  /again also works for $b=1$/.
\end{itemize}
It follows that now $\Phi$ is strictly increasing.]

Two more remarks. Let $x\in (0,1)$.

(1) If $F\circ G_l^{-1}$ is not right-continuous  at $x$, then $x\in \IM G$ and $G$ has a jump at $G_l^{-1}(x+)$. For, if $F\circ G_l^{-1}$ is not right-continuous at $x$, then we must have  $G_l^{-1}(x+)>G_l^{-1}(x)$, by the right-continuity of $F$, and necessarily $G(G_l^{-1}(x))=x$ (otherwise $G_l^{-1}$ would be locally constant to the right of $x$). It follows that $G$, hence $F$, has to be constant on $[G_l^{-1}(x),G_l^{-1}(x+))$. Now if $G$ would not have a jump at $G_l^{-1}(x+)$, then we obtain even that $G$, hence $F$, is constant on $[G_l^{-1}(x),G_l^{-1}(x+)]$, making $F\circ G_l^{-1}$ yet right-continuous at $x$, a contradiction.

(2) Consequently, if $F\circ G_l^{-1}$  is not right-continuous at $x$, and hence $G$ has a jump at $G_l^{-1}(x+)$, then $G_l^{-1}$ is locally constant strictly to the right of $x$.

Finally, to see $\Phi$ is continuous, note first that $\Phi$ is certainly continuous at $0$ and $1$:
\begin{itemize}
\item Continuity at $1$. If $1$ is an accumulation point of $\IM G\backslash \{1\}$ then either $\lim_{1-}G_l^{-1}=\infty$, and it follows from $\lim_{\infty }F=1$, and the nondecreasingness of $\Phi$; or else $G_l^{-1}(1-)\in \mathbb{R}$, $G$ is constant equal to $1$ after $G_l^{-1}(1-)$ and does not jump there, so the same holds of $F$, and again continuity follows easily. Otherwise $\Phi$ approaches $1$ linearly, so continuously.
\item Continuity at $0$. If $0$ is not an accumulation point of $\IM G\backslash \{0\}$, then $\Phi$ approaches it linearly, so continuously. Otherwise, if $0$ is an accumulation point of $\IM G\backslash \{0\}$, then either $G_l^{-1}(0+)=-\infty$, whence the claim follows from $\lim_{-\infty}F=0$; or else $G_l^{-1}(0+)\in \mathbb{R}$, $G$, hence $F$, vanishes on $(-\infty,G_l^{-1}(0+))$ and cannot jump at $G_l^{-1}(0+)$, so neither can $F$, and again the claim follows.
\end{itemize}
Next, for any $x\in (0,1)\backslash \IM G$, $\Phi$ is also continuous on $[\ux,\ox]$, thanks to Remark (2) and the nondecreasingness of $\Phi$:
\begin{itemize}
\item Continuity on $(\ux,\ox)$ is trivial.
\item Continuity at $\overline{x}$: we may assume $\ox\ne 1$. Continuity from the right at $\overline{x}$, if $\overline{x}$ is an accumulation point of $\IM G\backslash \{\ox\}$, follows from Remark (2) and the nondecreasingness of $\Phi$. Continuity from the left at $\overline{x}$ is trivial.
\item Continuity at $\ux$: we may assume $\ux\ne 0$. When $\ux\in\IM G$, and $\ux$ is an accumulation point of $\IM G\backslash \{\ux\}$, it follows from the nondecreasingness of $\Phi$, which gives $F\circ G_l^{-1}(\ux-)$ is $\geq$, hence $=$, to $F\circ G_l^{-1}(\ux)$. When $\ux\notin\IM G$, continuity from the right at $\ux$ is trivial, and continuity from the left at $\ux$ also follows trivially since then necessarily $\ux$ is an accumulation point of $\IM G\backslash \{\ux\}$.
\item There remains to show continuity at $x$ of $\Phi$ when $x$ is an isolated point of $\IM G\cap (0,1)$. Continuity from below is clear. Continuity from above follows simply from the nondecreasingness of $\Phi$ that implies $(F\circ G_l^{-1})(x-)$ is $\geq$, hence $=$, to $F(G_l^{-1}(x))$.
\end{itemize}
 Then, if $x\in (0,1)$ is a point of discontinuity of $\Phi$, it is necessarily from $\IM G$; and:
\begin{itemize}
\item Either $\Phi$ is not left-continuous at $x$. Then either $[x-\epsilon,x)\cap \IM G=\emptyset$ for some $\epsilon>0$, or else $F\circ G_l^{-1}$ must not be left-continuous at $x$. If the latter holds, then $F$ has a jump at $G_l^{-1}(x)$, and $x$ must have been a left increase point of $G_l^{-1}$. So $G$ must have had a jump at $G_l^{-1}(x)$ and the left limit of $G$ at $G_l^{-1}(x)$ must have been equal to $x$. But then $G_l^{-1}$ is locally constant strictly to the right of $x$.
\item Or else $\Phi$ is not right-continuous at $x$. Again either $(x,x+\epsilon]\cap \IM G=\emptyset$ for some $\epsilon>0$, or else $F\circ G_l^{-1}$ must not be right-continuous at $x$. Assuming the latter, by Remark (2) above, $G_l^{-1}$ is locally constant strictly to the right of $x$.
\end{itemize}
In every case $x$ belongs to the closure of a non-degenerate interval of constancy of $G_l^{-1}$, which is to say, that $\Phi$ is continuous there, a contradiction.
\end{proof}

\begin{lemma}[``Inverse Cauchy-Schwartz'']\label{lemma:reverse-Cauchy-Schwartz}
Let $\LLL$ be a probability law on $\mathcal{B}(\mathbb{R})$. Then for any $n\in \mathbb{N}_0$, and any choice of nonnegative, nonincreasing, right-continuous (respectively left-continuous and bounded) functions $f_1,\ldots,f_n$ on $\RR$, we have $\int \prod_{i=1}^nf_id\LLL\geq \prod_{i=1}^n\int f_id\LLL$.
\end{lemma}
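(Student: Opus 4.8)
The plan is to reduce the general statement to the bivariate case $n=2$ by induction, and to prove that case via the classical two-independent-copies argument underlying the Harris/Chebyshev correlation inequality.

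First I would dispose of the base cases $n\in\{0,1\}$, where the claimed inequality reads $1\geq 1$ and $\int f_1\,d\LLL\geq \int f_1\,d\LLL$ respectively (the empty product being $1$). For the inductive step from $n-1$ to $n$, I would observe that a product of nonnegative nonincreasing functions is again nonnegative and nonincreasing (for $x\leq y$ one has $f(x)g(x)\geq f(y)g(x)\geq f(y)g(y)$ by nonnegativity at each step), and that it inherits the relevant one-sided continuity and, in the bounded case, boundedness; hence $g:=\prod_{i=1}^{n-1}f_i$ is itself an admissible function. Granting the $n=2$ case applied to the pair $(f_n,g)$, together with the induction hypothesis applied to $f_1,\ldots,f_{n-1}$, and using $\int f_n\,d\LLL\geq 0$, one chains
$$\int \prod_{i=1}^n f_i\,d\LLL=\int f_n g\,d\LLL\geq \int f_n\,d\LLL\int g\,d\LLL\geq \int f_n\,d\LLL\prod_{i=1}^{n-1}\int f_i\,d\LLL=\prod_{i=1}^n\int f_i\,d\LLL,$$
every manipulation being legitimate in $[0,\infty]$ since all integrands are nonnegative.

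The crux is thus the case $n=2$. Here I would take $X$ and $Y$ to be independent random variables, each with law $\LLL$ (on a possibly enlarged probability space), and exploit that $f$ and $g$, being monotone in the same direction, satisfy $(f(X)-f(Y))(g(X)-g(Y))\geq 0$ pointwise: whichever of $X\leq Y$ or $X\geq Y$ holds, the two factors carry the same sign. Taking expectations and expanding, independence together with equidistribution collapses the four resulting terms to $2\bigl(\EE[f(X)g(X)]-\EE[f(X)]\EE[g(X)]\bigr)\geq 0$, which is precisely $\int fg\,d\LLL\geq \int f\,d\LLL\int g\,d\LLL$.

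The main obstacle---indeed essentially the only one---is integrability in the unbounded (right-continuous) case, since a nonincreasing function may diverge at $-\infty$ and the expectations in the expansion above need not be finite. I would circumvent this by first running the two-copies argument on the truncations $f\wedge N$ and $g\wedge N$, which are bounded, nonnegative, and nonincreasing, so that all expectations are finite and the expansion is fully rigorous; then I would let $N\to\infty$ and invoke monotone convergence on each of the three integrals, noting that $(f\wedge N)(g\wedge N)\uparrow fg$, to pass the inequality to the limit in $[0,\infty]$. In the left-continuous case boundedness is assumed outright, so this truncation is unnecessary; in either case the one-sided continuity hypothesis plays no role in the inequality itself beyond ensuring the (in fact automatic) Borel measurability of the monotone functions involved.
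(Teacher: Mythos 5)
Your proof is correct, but it follows a genuinely different route from the paper's. The paper proves the inequality for all $n$ at once by a monotone-class style argument: it first verifies the claim for indicators $f_i=\mathbbm{1}_{(-\infty,a_i)}$ (respectively $\mathbbm{1}_{(-\infty,a_i]}$), where it reduces to $\LLL\bigl(-\infty,\land_{i=1}^n a_i\bigr)=\land_{i=1}^n\LLL(-\infty,a_i)\geq\prod_{i=1}^n\LLL(-\infty,a_i)$, and then extends coordinate-by-coordinate, first by nonnegative linearity to simple nonincreasing functions of the prescribed one-sided continuity, and then by monotone (respectively bounded) convergence to the general case; the one-sided continuity hypotheses are what make the explicit dyadic approximants converge pointwise. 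You instead reduce to $n=2$ by induction (correctly checking that a product of nonnegative nonincreasing functions is again admissible) and settle the bivariate case by the classical Chebyshev/Harris two-independent-copies argument, with a truncation $f\wedge N$, $g\wedge N$ plus monotone convergence to handle possible non-integrability; the chaining in $[0,\infty]$ is sound (including the $0\cdot\infty$ degeneracies, under the usual convention). What your approach buys is a slightly stronger statement: as you note, the right/left-continuity hypotheses are not needed at all for the inequality, since monotone functions are automatically Borel and comonotonicity is all the two-copies argument uses; what the paper's approach buys is a single uniform argument over all $n$ that never leaves the realm of indicators and elementary limits, at the price of genuinely using the continuity assumptions in the approximation step.
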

\begin{proof}
The claim holds true when $f_i=\mathbbm{1}_{(-\infty,a_i)}$ (respectively $f_i=\mathbbm{1}_{(-\infty,a_i]}$) for some $a_i\in \mathbb{R}$, for all $i\in [n]$, since in that case $\LLL(-\infty,\land_{i=1}^na_i)\geq \prod_{i=1}^n\LLL(-\infty,a_i)$ (respectively $\LLL(-\infty,\land_{i=1}^na_i]\geq \prod_{i=1}^n\LLL(-\infty,a_i]$), thanks to $\LLL$ being a probability law. Then, for each $i\in [n]$ separately and successively (keeping each time the $f_j$, $j\ne i$, fixed), we may extend the inequality: first, by nonnegative linearity, to $f_i$ that are simple, right-continuous (respectively left-continuous), non-increasing and nonnegative functions on $\mathbb{R}$; second, by monotone (respectively bounded) convergence, to the general case (approximating $f_i$, e.g. by $f^N_i=\sum_{k\in \ZZ}\mathbbm{1}_{[\frac{(k-1)\land (N2^N)}{2^N},\frac{k\land (N2^N)}{2^N})}f_i(\frac{k\lor (-N2^N)}{2^N})$, respectively $f^N_i=\sum_{k\in \ZZ}\mathbbm{1}_{[\frac{(k-1)\land (N2^N)}{2^N},\frac{k\land (N2^N)}{2^N})}f_i(\frac{(k-1)\lor (-N2^N)}{2^N})$, letting $N\to \infty$ over $\mathbb{N}_0$).
\end{proof}

\end{document}